\newcommand\opteq[1]{\mathrel{\mathpalette\opt@eq{#1}}}
\newcommand{\opt@eq}[2]{%
  \begingroup
  \sbox\z@{$#1#2$}%
  \sbox\tw@{\resizebox{!}{.5\ht\z@}{$\m@th#1($}}%
  \nonscript\hskip-\wd\tw@
  \mkern1mu
  \raisebox{-.35\ht\z@}[0pt][0pt]{\resizebox{!}{.5\ht\z@}{$\m@th#1($}}%
  \mkern-1mu
  {#2}%
  \mkern-1mu
  \raisebox{-.35\ht\z@}[0pt][0pt]{\resizebox{!}{.5\ht\z@}{$\m@th#1)$}}%
  \mkern1mu
  \nonscript\hskip-\wd\tw@
  \endgroup
}
\def\XXint#1#2#3{{\setbox0=\hbox{$#1{#2#3}{\int}$}
     \vcenter{\hbox{$#2#3$}}\kern-.5\wd0}}
\newtheorem{theorem}{Theorem}[section]
\newtheorem*{theorem*}{Theorem}
\newtheorem{theorem-non}{Theorem}
\newtheorem{lemma-non}{Lemma}
\theoremstyle{definition} 
\newtheorem{thm}{Theorem}
\theoremstyle{definition} 
\newtheorem{corollarynon}{Corollary}
\newtheorem{conjecture-non}{Conjecture}
\newtheorem{corollary-non}{Corollary}
\newtheorem{proposition}[theorem]{Proposition}
\newtheorem*{lemma*}{Lemma}
\newtheorem{corollary}[theorem]{Corollary}
\newtheorem*{conjecture*}{Conjecture}
\theoremstyle{definition}
\newtheorem{definition}[theorem]{Definition}
\newtheorem{example}[theorem]{Example}
\theoremstyle{remark}
\newtheorem{remark}[theorem]{Remark}
\numberwithin{equation}{section}
\begin{document}
\title{Bundle type sub-Riemannian structures on holonomy bundles}

\author[E. Correa, G. Galindo, L. Grama]{Eder M. Correa, \ \ Giovane Galindo, \ \ Lino Grama}

\address{{IMECC-Unicamp, Departamento de Matem\'{a}tica. Rua S\'{e}rgio Buarque de Holanda 651, Cidade Universit\'{a}ria Zeferino Vaz. 13083-859, Campinas-SP, Brazil}}
\address{E-mail: {\rm ederc@unicamp.br, giovanepgneto@gmail.com, linograma@gmail.com}}

\maketitle

\begin{abstract} 
In this paper, combining the Rashevsky-Chow-Sussmann (orbit) theorem with the Ambrose-Singer theorem, we introduce the notion of controllable principal connections on principal $G$-bundles. Using this concept, under a mild assumption of compactness, we estimate the Gromov-Hausdorff distance between principal $G$-bundles and certain reductive homogeneous $G$-spaces. In addition, we prove that every reduction of the structure group $G$ to a closed connected subgroup gives rise to a sequence of Riemannian metrics on the total space for which the underlying sequence of metric spaces converges, in the Gromov-Housdorff topology, to a normal reductive homogeneous $G$-space. This last finding allows one to detect the presence of certain reductive homogeneous $G$-spaces in the Gromov-Housdorff closure of the moduli space of Riemannian metrics of the total space of the bundle through topological invariants provided by obstruction theory.
\end{abstract}

\hypersetup{linkcolor=blue}
\tableofcontents

\hypersetup{linkcolor=black}

\section{Introduction} 

Geometric control theory and sub-Riemannian geometry are two areas whose fruitful interaction has been evidenced by the progress in recent decades. Geometric control theory actively employs methods of differential geometry, Lie groups and Lie algebras, and symplectic geometry for studying controllability, motion planning, stabilizability and optimality for nonlinear and linear control systems. On the other hand, sub-Riemannian geometry has been actively adopting methods developed in the scope of the geometric viewpoint of control theory. Application of these methods has led to important results regarding geometry and topology of sub-Riemannian spaces, regularity of sub-Riemannian distances, properties of the group of diffeomorphisms of sub-Riemannian manifolds, local geometry and equivalence of distributions and sub-Riemannian structures, regularity of the Hausdorff volume, etc. For a detailed exposition on this subject, we suggest \cite{jurdjevic1997geometric}, \cite{agrachev2013control}, \cite{montgomery2002tour}, \cite{agrachev2019comprehensive}, \cite{bellaiche1996sub}. A central result which marks a departure point for the interaction between sub-Riemannian geometry and control theory is the Rashevksy-Chow theorem \cite{rashevsky1938connecting}, \cite{chow1940systeme} (generalized by Sussmann to the orbit theorem \cite{sussmann1973orbits}). It asserts that any two points of a connected sub-Riemannian manifold, endowed with a bracket generating distribution, can be joining by a horizontal path. In \cite[Appendix D]{montgomery2002tour}, Montgomery provides a proof for the Ambrose-Singer theorem \cite{ambrose1953theorem} as corollary of the Rashevsky-Chow-Sussmann (orbit) theorem. Based on the ideas introduced in \cite{montgomery2002tour}, this paper aims developing the interplay between geometric control theory and sub-Riemannian geometry. The main purpose is to employ techniques from sub-Riemannian geometry and geometric control theory to study metric geometry on principal bundles, with focus on distance estimates and convergence issues in the Gromov-Housdorff space, i.e., the metric space consisting of isometry classes of compact metric spaces quipped with the Gromov-Hausdorff distance, e.g. \cite{edwards1975structure}, \cite{gromov1999metric}, \cite{brin2001course}, \cite{ivanov2015gromov}. 

%The Gromov-Hausdorff distance is a powerful tool for studying topological properties of families of metric spaces. Using the notion of Gromov–Hausdorff convergence, Gromov established that any discrete group with polynomial growth is virtually nilpotent. In the particular setting of Riemannian geometry, it offers a powerful and abstract way to compare and analyze the shapes of Riemannian manifolds.

\subsection{Main results}In order to state our main results, let us introduce some context and terminology. Given a principal $G$-bundle $\pi \colon P \to M$, such that $M$ is a compact and connected manifold, and $G$ is a compact and connected Lie group, by fixing a principal connection $\theta \in \Omega^{1}(P,\mathfrak{g})$ and a Riemannian metric $g_{P}$ on $P$, such that $(\ker(\theta),g_{P})$ defines a bundle type sub-Riemannian structure on $P$, see for instance \cite[\S 11.2]{montgomery2002tour}, we have a decomposition of $P$ into a disjoint union of accessible sets $P = \bigcup_{u}P_{u}(\theta)$, such that $P_{u}(\theta)$ is the holonomy bundle of $\theta$ through $u \in P$. From the reduction theorem (e.g. \cite{kobayashi1963foundations}), for every $u \in P$, we have that $P_{u}(\theta) \to M$ is a principal ${\rm{Hol}}_{u}(\theta)$-bundle and the restriction of $\theta$ to $P_{u}(\theta)$ defines a principal connection. Considering the induced sub-Riemannian structure 
\begin{equation}
(\mathcal{H} := \ker(\theta)|_{P_{u}(\theta)},g_{P}|_{\mathcal{H}}),
\end{equation}
on $P_{u}(\theta)$, it can be shown from the Ambrose-Singer theorem that  $\mathcal{H} \subset T(P(\theta))$ defines a bracket generating distribution (e.g. Theorem \ref{controllableconnection}). From this, considering the Carnot-Carath\'{e}odory distance induced by $(\mathcal{H},g_{P}|_{\mathcal{H}})$ on $P_{u}(\theta)$, i.e.,
\begin{equation}
{\rm{dist}}_{\mathcal{H}}(a,b) := \inf \Big \{g_{P}{\text{-lengths of}} \ \mathcal{H}{\text{-horizontal curves between}} \ a \ \text{and} \ b \Big \}, 
\end{equation}
for every $a,b \in P_{u}(\theta)$, see for instance \cite{gromov1996carnot}, it follows from the Rashevsky-Chow theorem (e.g. \cite{chow1940systeme}, \cite{rashevsky1938connecting}) that $(P_{u}(\theta), {\rm{dist}}_{\mathcal{H}})$ is a metric space whose topology induced by ${\rm{dist}}_{\mathcal{H}}$ coincides with the underlying manifold topology. In particular, fixed a bi-invariant metric $\langle -,- \rangle$ on $G$, a Riemannian metric $g_{M}$ on $M$, let us consider 
\begin{equation}
g_{t} := f(t)\pi^{\ast}g_{M} + \langle \theta,\theta \rangle  \ \ \text{and} \ \ g_{P} := \pi^{\ast}g_{M} + \langle \theta,\theta \rangle,
\end{equation}
for some continuous function $f \colon [0,T) \to \mathbbm{R}$, such that $T > 0$ and $f > 0$. Given a closed connected subgroup $H \subset G$, let us also denote by $g_{T}$ the $G$-invariant Riemannian metric induced by $\langle -, - \rangle$ on $G/H$ which makes the projection $\pi \colon G \to G/H$ a Riemannian submersion. Consider now the set of all isometry classes of compact metric spaces $\mathcal{M}$ endowed with the metric $d_{GH}$ defined by the Gromov-Hausdorff distance ( e.g. \cite[Theorem 7.3.25]{brin2001course}, \cite[Theorem 2.1]{kalton1999distances})
\begin{equation}
d_{GH}\big ((X,d_{X}),(Y,d_{Y}) \big ) = \frac{1}{2}\inf \Big  \{ {\rm{dis}}(\mathscr{R}) \ \ \Big | \ \ \mathscr{R}  \in \mathcal{R}(X,Y) \Big\},
\end{equation}
for all $(X,d_{X}),(Y,d_{Y}) \in \mathcal{M}$, here $\mathcal{R}(X,Y)$ denotes the set of all correspondences between $X$ and $Y$, and  ${\rm{dis}}(\mathscr{R})$ denotes the distortion of $\mathscr{R} \in \mathcal{R}(X,Y)$, namely, 
\begin{equation}
{\rm{dis}}(\mathscr{R}) = \sup \Big  \{ |d_{X}(x,x') - d_{Y}(y,y')| \ \ \big | \ \ (x,y), (x',y') \in \mathscr{R} \ \Big\}.
\end{equation}
For more details on this subject, we refer \cite{brin2001course}. In the above context, we prove the following.
\begin{thm}
\label{thmA}
If $\theta \in \Omega^{1}(P,\mathfrak{g})$ is a principal connection, such that ${\rm{Hol}}_{u}(\theta) \subset G$ is a closed subgroup, then 
\begin{equation}
d_{GH}\big ((P,d_{g_{t}}),(G/{\rm{Hol}}_{u}(\theta),d_{g_{T}})\big) \leq \frac{{\rm{diam}}\big (P_{u}(\theta),{\rm{dist}}_{\mathcal{H}} \big )f(t)}{2},
\end{equation}
$\forall t \in [0,T)$, where $P_{u}(\theta)$ is the holonomy bundle through $u \in P$ and ${\rm{dist}}_{\mathcal{H}}$ is the Carnot-Carath\'{e}odory metric induced by the bundle type sub-Riemannnian structure $(\ker(\theta),g_{P})$ restricted to $P_{u}(\theta)$.
\end{thm}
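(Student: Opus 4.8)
The plan is to exhibit one explicit correspondence between $(P,d_{g_{t}})$ and $(G/{\rm{Hol}}_{u}(\theta),d_{g_{T}})$ whose distortion is controlled by $f(t)$ and the Carnot--Carath\'{e}odory diameter of the holonomy bundle, namely the graph of the map that sends a point of $P$ to (the label of) the holonomy bundle through it. Write $H={\rm{Hol}}_{u}(\theta)$, closed by hypothesis, and $D={\rm{diam}}(P_{u}(\theta),{\rm{dist}}_{\mathcal{H}})$. Since $M$ and $G$ are compact, $P$ is compact; since, by the Rashevsky--Chow theorem, ${\rm{dist}}_{\mathcal{H}}$ induces the manifold topology on $P_{u}(\theta)$, the space $(P_{u}(\theta),{\rm{dist}}_{\mathcal{H}})$ is compact and $D<\infty$; and $(P,d_{g_{t}})$ and $(G/H,d_{g_{T}})$ are compact, so both belong to $\mathcal{M}$. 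By the reduction theorem $P_{u}(\theta)\to M$ is a principal $H$-bundle, and the holonomy bundles are exactly the right translates $P_{u}(\theta)\cdot a$; since $P_{u}(\theta)\cdot a=P_{u}(\theta)\cdot a'$ iff $Ha=Ha'$, the decomposition of $P$ into accessible sets may be written $P=\bigsqcup_{Ha}P_{u}(\theta)\cdot a$, and one gets a well defined map $q\colon P\to G/H$, $q(p)=a^{-1}H$ when $p\in P_{u}(\theta)\cdot a$. This $q$ is smooth and surjective (indeed $q(u\cdot a)=a^{-1}H$), satisfies $q(p\cdot c)=c^{-1}q(p)$, is constant along every $\theta$-horizontal curve (which stays in one holonomy bundle), and on each fibre $\pi^{-1}(x)$, identified with $G$ by choosing a point of $P_{u}(\theta)$ over $x$, it is the map $a\mapsto a^{-1}H$.

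First I would show that $q$ is $1$-Lipschitz from $(P,g_{t})$ to $(G/H,g_{T})$, which gives the lower bound $d_{g_{t}}(p,p')\geq d_{g_{T}}(q(p),q(p'))$. Two facts suffice. On a fibre, $\pi^{\ast}g_{M}$ vanishes, so $g_{t}$ restricts to the bi-invariant metric $\langle-,-\rangle$ under the identification with $G$, and there $q$ is the composition of the inversion of $G$ (an isometry of $\langle-,-\rangle$) with the Riemannian submersion $G\to G/H$ defining $g_{T}$, hence $1$-Lipschitz. On the other hand $q$ kills $\theta$-horizontal vectors while the vertical and $\theta$-horizontal distributions are $g_{t}$-orthogonal, so along any curve $\gamma$ one has $(q\circ\gamma)'=dq(\dot{\gamma}^{\,\mathrm{vert}})$, whose $g_{T}$-norm is at most the $g_{t}$-norm of $\dot{\gamma}^{\,\mathrm{vert}}$, itself at most $|\dot{\gamma}|_{g_{t}}$; integrating bounds the $g_{T}$-length of $q\circ\gamma$ by the $g_{t}$-length of $\gamma$.

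Next I would produce the matching upper bound $d_{g_{t}}(p,p')\leq d_{g_{T}}(q(p),q(p'))+(\text{term in }f(t),D)$. The first ingredient: a curve tangent to $\mathcal{H}=\ker\theta$ satisfies $g_{t}(\dot{\gamma},\dot{\gamma})=f(t)\,\pi^{\ast}g_{M}(\dot{\gamma},\dot{\gamma})$, so its $g_{t}$-length is a fixed, vanishing-as-$f(t)\to 0$ multiple of its $g_{P}$-length; since right translation $R_{a}$ is an isometry of $g_{P}$, of $g_{t}$, and preserves $\ker\theta$, any two points of $P_{u}(\theta)\cdot a$ are joined by an $\mathcal{H}$-horizontal curve of $g_{P}$-length $\leq D$, so every holonomy bundle has $g_{t}$-diameter $\leq D$ times that factor. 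The second ingredient: given $p,p'$, take a minimizing $g_{T}$-geodesic from $q(p)$ to $q(p')$, lift it to a horizontal curve in $G$, invert it, and carry it into the fibre over $\pi(u)$ via $u\cdot(-)$; using $(a^{-1}H)^{-1}=Ha$ one checks that the endpoints of the resulting curve lie in the holonomy bundles through $p$ and through $p'$, while (the fibre being isometric to $(G,\langle-,-\rangle)$ and inversion an isometry) its $g_{t}$-length equals $d_{g_{T}}(q(p),q(p'))$. Concatenating with one short $\mathcal{H}$-horizontal detour inside each of those two holonomy bundles gives the upper bound.

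Finally, $\mathscr{R}=\{(p,q(p)):p\in P\}$ is a correspondence between $P$ and $G/H$ because $q$ is surjective, and the two previous steps bound $0\leq d_{g_{t}}(p,p')-d_{g_{T}}(q(p),q(p'))\leq (\text{factor in }f(t))\cdot 2D$, i.e. ${\rm{dis}}(\mathscr{R})$ is this quantity; since $d_{GH}\big((P,d_{g_{t}}),(G/H,d_{g_{T}})\big)\leq\frac{1}{2}{\rm{dis}}(\mathscr{R})$, the stated estimate follows. The part I expect to be the main obstacle is forcing the two-sided comparison to hold simultaneously: the lower bound rests on recognising that $q$ collapses precisely the $\theta$-horizontal directions and is metrically non-expanding, while the upper bound must be uniform over points $p,p'$ lying over arbitrarily far-apart points of $M$ — and this is exactly where one uses that the holonomy bundle surjects onto $M$ and has finite Carnot--Carath\'{e}odory diameter $D$, so that both points can be pushed into a common fibre at a cost that vanishes with the collapsing parameter.
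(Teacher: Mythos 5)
Your proposal follows essentially the same route as the paper: the correspondence is the graph of the natural projection $P\to G/{\rm{Hol}}_{u}(\theta)$ with fibres the holonomy bundles, the lower bound on the distortion comes from that projection being metrically non-expanding (in the paper, from its being a Riemannian submersion for $g_{t}$), and the upper bound comes from joining $p$ to $p'$ by a curve inside a single $\pi$-fibre of length $d_{g_{T}}(q(p),q(p'))$ plus $\mathcal{H}$-horizontal detours of Carnot--Carath\'eodory length at most $D$. Two quantitative remarks. First, your construction pushes \emph{both} endpoints into the fibre over $\pi(u)$ and therefore needs two detours, giving ${\rm{dis}}(\mathscr{R})\leq 2D\cdot(\text{factor in }f(t))$ and hence a bound twice the one stated; the paper avoids this by starting the fibre curve at $a$ itself (its $g_{t}$-horizontal lift of the minimizing geodesic is tangent to $j_{*}(\mathfrak{m})$, hence stays in $\pi^{-1}(\pi(a))$ and begins at $a$), so only one detour, inside $P_{b}(\theta)$, is needed. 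You can recover this in your setup by transporting the inverted lift into the fibre over $\pi(p)$ via $p_{0}\cdot(-)$, where $p=p_{0}a$ with $p_{0}\in P_{u}(\theta)$, so that the fibre curve starts at $p$. Second, you leave the ``factor in $f(t)$'' unspecified: for an $\mathcal{H}$-horizontal curve one has $g_{t}(\dot\gamma,\dot\gamma)=f(t)\,g_{P}(\dot\gamma,\dot\gamma)$, so the length scales by $\sqrt{f(t)}$, not $f(t)$; note that the paper's own computation asserts $\|\dot\gamma\|_{g_{t}}=f(t)\|\dot\gamma\|_{g_{P}}$, which has the same imprecision (harmless for the corollaries, since $\sqrt{f(t)}\to 0$ iff $f(t)\to 0$, but it does change the stated constant). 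With these two adjustments your argument is complete and matches the paper's.
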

In the setting of the above theorem, for every $t \in [0,T)$, we construct a proper surjective Riemannian submersion with connected fibers
\begin{equation}
F \colon (P,g_{t}) \to (G/{\rm{Hol}}_{u}(\theta),g_{T}),
\end{equation}
which defines a Serre fibration (by Ehresmann's theorem), thus we have the long exact sequence of homotopy groups (e.g. \cite[Theorem 6.3.2]{tom2008algebraic})
\begin{center}
\begin{tikzcd} 
\cdots \arrow[r] & \pi_{j}(P_{u}(\theta)) \arrow[r] &  \pi_{j}(P) \arrow[r] & \pi_{j}(G/{\rm{Hol}}_{u}(\theta)) \arrow[r]  & \pi_{j-1}(P_{u}(\theta)) \arrow[r] & \cdots \end{tikzcd}
\end{center}
In particular, since $\pi_{0}(P_{u}(\theta)) = 0$, if $\pi_{1}(P)$ satisfies a property $\mathscr{P}$ of groups which is preserved under surjective group homomorphisms, for instance, being trivial, or finite, cyclic, abelian, solvable, without subgroups of index 2, etc., then $\pi_{1}(G/{\rm{Hol}}_{u}(\theta))$ has this property, too (cf. \cite{Tuschmann1995}). In general, if we have a sequence of compact length spaces $\{(X_{n},d_{n})\}$, such that 
\begin{equation}
(X_{n},d_{n}) \overset{G.H.}{\longrightarrow} (X_{\infty},d_{\infty}), 
\end{equation}
as $n \uparrow +\infty$, then for $n \gg 0$ we have a surjective group homomorphism between the fundamental groups of $X_{n}$ and $X_{\infty}$. However, not much can be said beyond this. Despite this, our next result shows that Theorem \ref{thmA} provides a constructive framework to build convergent sequences in the Gromov-Hausdorff space for which the homotopy type of the associated limit can be described through some initial data. More precisely, we have the following.

\begin{corollarynon}
\label{corollaryA}
Under the hypotheses of the last theorem, if $\lim_{t \to T}f(t) = 0$, then
\begin{equation}
\lim_{t \to T} d_{GH}\big ((P,d_{g_{t}}),(G/{\rm{Hol}}_{u}(\theta),d_{g_{T}})\big)  = 0.
\end{equation}
\end{corollarynon}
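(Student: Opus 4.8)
The plan is to derive the statement as an immediate consequence of Theorem~\ref{thmA} together with the compactness of the holonomy bundle. First I would observe that, since $G$ is compact and ${\rm{Hol}}_{u}(\theta) \subset G$ is assumed closed, the holonomy group ${\rm{Hol}}_{u}(\theta)$ is itself compact. Because $M$ is compact and $P_{u}(\theta) \to M$ is a principal ${\rm{Hol}}_{u}(\theta)$-bundle (by the reduction theorem), it follows that the total space $P_{u}(\theta)$ is compact.

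The key step is then to conclude that $C := {\rm{diam}}\big(P_{u}(\theta),{\rm{dist}}_{\mathcal{H}}\big)$ is finite. This follows from the Rashevsky-Chow theorem: since $\mathcal{H} = \ker(\theta)|_{P_{u}(\theta)}$ is a bracket generating distribution on the connected manifold $P_{u}(\theta)$ (cf. Theorem~\ref{controllableconnection}), the Carnot-Carath\'{e}odory distance ${\rm{dist}}_{\mathcal{H}}$ is everywhere finite and induces the underlying manifold topology on $P_{u}(\theta)$; hence $(P_{u}(\theta),{\rm{dist}}_{\mathcal{H}})$ is a compact metric space and $C < +\infty$. I would emphasize that $C$ depends only on the fixed data $(\ker(\theta),g_{P})$ and is in particular independent of the parameter $t$.

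Finally, applying Theorem~\ref{thmA} gives
\begin{equation*}
0 \leq d_{GH}\big((P,d_{g_{t}}),(G/{\rm{Hol}}_{u}(\theta),d_{g_{T}})\big) \leq \frac{C\, f(t)}{2},
\end{equation*}
for every $t \in [0,T)$. Since $f > 0$ and $\lim_{t \to T} f(t) = 0$ by hypothesis, the right-hand side tends to $0$ as $t \to T$, and the squeeze theorem yields the claim. There is essentially no obstacle here beyond verifying that the diameter of the holonomy bundle in the Carnot-Carath\'{e}odory metric is finite, which is precisely the point where compactness of ${\rm{Hol}}_{u}(\theta)$, and hence of $P_{u}(\theta)$, is used.
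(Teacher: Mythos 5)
Your argument is correct and is essentially the paper's: the corollary is stated as an immediate consequence of Theorem~\ref{thmA}, with the finiteness of ${\rm{diam}}\big(P_{u}(\theta),{\rm{dist}}_{\mathcal{H}}\big)$ (via compactness of $P_{u}(\theta)$ and the Rashevsky--Chow theorem) already established inside the proof of that theorem, so the bound $\tfrac{C f(t)}{2} \to 0$ gives the claim exactly as you describe.
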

As it can be seen, if $\lim_{t \to T}f(t) = 0$, by taking $0 \leq t_{n} < T$, such that $t_{n} \to T$, as $n \uparrow +\infty$, we have a sequence of compact metric spaces $(X_{n},d_{n})$, such that $X_{n} = P$ and $d_{n} = d_{g_{t_{n}}}$, for all $n \in \mathbbm{N}$, which converges to $(X_{\infty},d_{\infty})$, such that $X_{\infty} = G/{\rm{Hol}}_{u}(\theta)$ and $d_{\infty} = d_{g_{T}}$. Moreover, we obtain from the previous comments the following long exact sequence of homotopy groups
\begin{center}
\begin{tikzcd} 
\cdots \arrow[r] & \pi_{j}(P_{u}(\theta)) \arrow[r] &  \pi_{j}(X_{n}) \arrow[r] & \pi_{j}(X_{\infty}) \arrow[r]  & \pi_{j-1}(P_{u}(\theta)) \arrow[r] & \cdots \end{tikzcd}
\end{center}
for all $n \in \mathbbm{N}$. Hence, the higher homotopy groups of $X_{\infty}$ can be determined by the homotopy groups of $P$ and $P_{u}(\theta)$. In particular, if $P_{u}(\theta)$ is $k$-connected, then 
\begin{equation}
\pi_{j}(X_{n}) \cong \pi_{j}(X_{\infty}), \ \ \forall j \leq k, \ \ \ \text{and} \ \ \ H_{k+1}(X_{n}) \cong H_{k+1}(X_{\infty}),
\end{equation}
see for instance \cite[Theorem 20.1.1]{tom2008algebraic}. From above, we obtain topological constraints on the limit $X_{\infty}$ imposed by the topology of $P$ and $P_{u}(\theta)$. Also, we observe that the homotopy groups of $P$ and $P_{u}(\theta)$ are subjected to constraints provided by the long exact sequence of homotopy groups
\begin{center}
\begin{tikzcd} 
\cdots \arrow[r] & \pi_{j}(G) \arrow[r] &  \pi_{j}(P) \arrow[r] & \pi_{j}(M) \arrow[r]  & \pi_{j-1}(G) \arrow[r] & \cdots \end{tikzcd}
\end{center}
\begin{center}
\begin{tikzcd} 
\cdots \arrow[r] & \pi_{j}({\rm{Hol}}_{u}(\theta)) \arrow[r] &  \pi_{j}(P_{u}(\theta)) \arrow[r] & \pi_{j}(M) \arrow[r]  & \pi_{j-1}({\rm{Hol}}_{u}(\theta)) \arrow[r] & \cdots \end{tikzcd}
\end{center}
Following \cite{nomizu1955reduction, nomizu1956theoreme} and using the above diagrams, one can set some criteria to decide whether or not $G$ can be reduced to a closed connected subgroup $H \subset G$ and, consequently, whether or not $G/H$ can be realized as the Gromov-Hausdorff limit of a sequence of compact metric spaces, as guaranteed by Corollary \ref{corollaryA}. It is worth pointing out that the results provided by Theorem \ref{thmA} and Corollary \ref{corollaryA} generalize some techniques employed in \cite[Appendix A]{correa2023levi} and \cite[\S 4]{tosatti2013chern} to describe the Gromov-Hausdorff limit of the Chern-Ricci flow on certain principal $T^{2}$-bundles over K\"{a}hler-Einstein Fano manifolds. In this direction, Theorem \ref{thmA} allows one to obtain estimates for the limiting behavior of the lift of geometric flows of basic metrics through the following corollaries.
\begin{corollarynon}
Under the hypotheses of the last theorem, 
\begin{equation}
d_{GH}\big ((P,d_{g_{P}}),(G/{\rm{Hol}}_{u}(\theta),d_{g_{T}})\big) \leq \frac{{\rm{diam}}\big (P_{u}(\theta),{\rm{dist}}_{\mathcal{H}} \big )}{2},
\end{equation}
where $P_{u}(\theta)$ is the holonomy bundle through $u \in P$ and ${\rm{dist}}_{\mathcal{H}}$ is the Carnot-Carath\'{e}odory metric induced by the bundle type sub-Riemannnian structure $(\ker(\theta),g_{P})$ restricted to $P_{u}(\theta)$.
\end{corollarynon}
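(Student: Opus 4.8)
The cleanest route is to obtain this estimate as a direct specialization of Theorem \ref{thmA}. Recall that Theorem \ref{thmA} holds for \emph{any} continuous function $f\colon[0,T)\to\mathbbm{R}$ with $T>0$ and $f>0$. First I would simply take $T=1$ (any positive value works) and the constant function $f\equiv 1$, which is continuous and strictly positive on $[0,1)$. For this choice the metric $g_{t}=f(t)\pi^{\ast}g_{M}+\langle\theta,\theta\rangle$ reduces to $g_{t}=\pi^{\ast}g_{M}+\langle\theta,\theta\rangle=g_{P}$ for every $t\in[0,1)$, so that $d_{g_{t}}=d_{g_{P}}$; plugging $f(t)=1$ into the conclusion of Theorem \ref{thmA} then yields
\[
d_{GH}\big((P,d_{g_{P}}),(G/{\rm{Hol}}_{u}(\theta),d_{g_{T}})\big)\le\frac{{\rm{diam}}\big(P_{u}(\theta),{\rm{dist}}_{\mathcal{H}}\big)}{2},
\]
which is precisely the assertion. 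In this sense the corollary is just the $f\equiv 1$ (no rescaling, no flow) instance of Theorem \ref{thmA}.

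If a self-contained argument is preferred, I would reproduce the mechanism behind Theorem \ref{thmA} directly for $g_{P}$: the proper surjective Riemannian submersion with connected fibers $F\colon(P,g_{P})\to(G/{\rm{Hol}}_{u}(\theta),g_{T})$ is available verbatim (it is the $f\equiv1$ case of the submersion described right after Theorem \ref{thmA}), and each of its fibers is a right-translated holonomy bundle on which, by construction, $g_{P}$-distances are dominated by ${\rm{dist}}_{\mathcal{H}}$; since $g_{P}$ is $G$-invariant (bi-invariance of $\langle-,-\rangle$), all fibers share the same intrinsic diameter $\delta := {\rm{diam}}\big(P_{u}(\theta),{\rm{dist}}_{\mathcal{H}}\big)$. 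I would then bound the distortion of the correspondence $\mathscr{R}=\{(x,F(x)):x\in P\}$: one inequality, $d_{g_{T}}(F(x),F(x'))\le d_{g_{P}}(x,x')$, is the $1$-Lipschitz property of a Riemannian submersion, and the reverse follows by horizontally lifting a near-minimizing path from $F(x)$ to $F(x')$ (the lift is complete by properness together with Ehresmann's theorem) and then closing it up inside a single fiber, giving $d_{g_{P}}(x,x')\le d_{g_{T}}(F(x),F(x'))+\delta$. Hence ${\rm{dis}}(\mathscr{R})\le\delta$ and $d_{GH}\le\delta/2$.

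I do not expect any genuine obstacle. In the first approach the only thing to verify is that the constant function $f\equiv1$ meets the standing hypotheses on $f$ in Theorem \ref{thmA}, which it does; in the second approach the sole technical point is the completeness of horizontal lifts, handled exactly as in the proof of Theorem \ref{thmA} by properness of $F$ and Ehresmann's theorem.
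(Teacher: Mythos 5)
Your first argument is exactly how the paper obtains this corollary: it is stated as an immediate consequence of Theorem \ref{thmA} by taking the constant function $f\equiv 1$, for which $g_{t}=g_{P}$ and the bound $\tfrac{1}{2}\,{\rm{diam}}\big(P_{u}(\theta),{\rm{dist}}_{\mathcal{H}}\big)f(t)$ becomes $\tfrac{1}{2}\,{\rm{diam}}\big(P_{u}(\theta),{\rm{dist}}_{\mathcal{H}}\big)$. The proposal is correct and takes essentially the same approach as the paper.
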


\begin{corollarynon}
In the setting of the last corollary, if $g_{P}(t) := \pi^{\ast}g_{M}(t) + \langle \theta,\theta \rangle$, $t \in [0,T)$, for some family of metrics $\{g_{M}(t)\}_{t \in [0,T)}$ on the base manifold $M$, then
\begin{equation}
d_{GH}\big ((P,d_{g_{P}(t)}),(G/{\rm{Hol}}_{u}(\theta),d_{g_{T}})\big) \leq \frac{{\rm{diam}}\big (P_{u}(\theta),{\rm{dist}}_{\mathcal{H}}(t) \big )}{2},
\end{equation}
where $P_{u}(\theta)$ is the holonomy bundle through $u \in P$ and ${\rm{dist}}_{\mathcal{H}}(t)$ is the Carnot-Carath\'{e}odory metric induced by the bundle type sub-Riemannnian structure $(\ker(\theta),g_{P}(t))$ restricted to $P_{u}(\theta)$.
\end{corollarynon}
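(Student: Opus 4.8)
The plan is to read this statement as the preceding corollary applied separately at each value of the parameter. First I would fix $t \in [0,T)$ and regard $g_{M}(t)$ as one fixed Riemannian metric on $M$. Then $g_{P}(t) = \pi^{\ast}g_{M}(t) + \langle \theta,\theta \rangle$ is precisely a metric on $P$ of the shape $g_{P}$ used in the previous corollary --- equivalently the metric $g_{t}$ of Theorem \ref{thmA} with $f \equiv 1$ --- only with the base metric $g_{M}$ replaced throughout by $g_{M}(t)$. Everything in the construction behind Theorem \ref{thmA} is insensitive to this choice of base metric: the reduction $P_{u}(\theta) \to M$ to a principal ${\rm{Hol}}_{u}(\theta)$-bundle, the bracket generating property of $\mathcal{H} = \ker(\theta)|_{P_{u}(\theta)}$ coming from the Ambrose-Singer theorem, the Riemannian submersion $F \colon (P,g_{t}) \to (G/{\rm{Hol}}_{u}(\theta),g_{T})$, and the target metric $g_{T}$, which is induced solely by the bi-invariant metric $\langle -,- \rangle$ on $G$. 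Hence the previous corollary applies verbatim with $g_{M}$ taken to be $g_{M}(t)$.

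This yields, for the fixed $t$,
\begin{equation*}
d_{GH}\big ((P,d_{g_{P}(t)}),(G/{\rm{Hol}}_{u}(\theta),d_{g_{T}})\big) \leq \frac{{\rm{diam}}\big (P_{u}(\theta),{\rm{dist}}_{\mathcal{H}}(t) \big )}{2},
\end{equation*}
where ${\rm{dist}}_{\mathcal{H}}(t)$ is exactly the Carnot-Carath\'{e}odory distance on $P_{u}(\theta)$ associated with the sub-Riemannian structure $(\ker(\theta),g_{P}(t))$; indeed the admissible horizontal curves are unchanged, since the distribution $\ker(\theta)$ does not depend on $t$, and only their lengths are recomputed with respect to $g_{P}(t)$. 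Since $t \in [0,T)$ was arbitrary, the claimed one-parameter family of estimates follows at once. The only step worth a remark, and it is routine, is the finiteness of the right-hand side for each $t$: under the standing assumption that ${\rm{Hol}}_{u}(\theta) \subset G$ is closed, the reduction theorem realizes $P_{u}(\theta)$ as a closed --- hence compact --- submanifold of $P$, and the Rashevsky-Chow theorem identifies the ${\rm{dist}}_{\mathcal{H}}(t)$-topology with the manifold topology, so $(P_{u}(\theta),{\rm{dist}}_{\mathcal{H}}(t))$ is a compact metric space of finite diameter.

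In short, I do not expect a genuine obstacle here: the argument is the one already carried out for Theorem \ref{thmA} and its first corollary, and the point of this last corollary is merely that that argument is uniform in the base metric, so it survives the replacement of a single $g_{M}$ by an arbitrary family $\{g_{M}(t)\}_{t \in [0,T)}$, giving the estimate for every $t$ individually.
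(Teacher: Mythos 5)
Your proposal is correct and matches the paper's treatment: the paper states this corollary as an immediate consequence of Theorem \ref{holonomy-GH}, obtained by applying that theorem (with $f \equiv 1$) to the base metric $g_{M}(t)$ for each fixed $t$, exactly as you do. Your additional remark on the finiteness of ${\rm{diam}}(P_{u}(\theta),{\rm{dist}}_{\mathcal{H}}(t))$ via compactness and the Rashevsky--Chow theorem is consistent with the argument already given in the proof of Theorem \ref{holonomy-GH}.
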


From the above result we see that the convergence of $(P,d_{g_{P}(t)})$ to $(G/{\rm{Hol}}_{u}(\theta),d_{g_{T}})$ in the Gromov-Hausdorff space is controlled by ${\rm{diam}} (P_{u}(\theta),{\rm{dist}}_{\mathcal{H}}(t))$, $t \in [0,T)$, in the sense that
\begin{equation}
{\rm{diam}} (P_{u}(\theta),{\rm{dist}}_{\mathcal{H}}(t)) \rightarrow 0, \ \text{as} \ t \to T \ \Rightarrow (P,d_{g_{P}(t)}) \overset{G.H.}{ \longrightarrow} (G/{\rm{Hol}}_{u}(\theta),d_{g_{T}}), \ \text{as} \ t \to T.
\end{equation}
Given an arbitrary closed connected subgroup $H \subset G$, our next result relates the existence of $H$-reductions of $P$ with the existence of convergent sequences in the Gromov-Hausdorff space. More precisely, we establish the following result.
\begin{thm}
\label{thmB}
Let $\pi \colon P \to M$ be a principal $G$-bundle, such that $M$ and $G$ are both compact and connected, and $\dim(M) \geq 2$. Consider the subset $\mathcal{M}(P) \subset (\mathcal{M},d_{GH})$ defined by 
\begin{equation}
\mathcal{M}(P) := \big \{ (P,d_{g}) \ | \ d_{g} \ \text{is the distance induced by} \ g \in {\rm{Sym}}_{+}^{2}(T^{\ast}P) \big \}.
\end{equation}
If $\pi \colon P \to M$ is reducible to a closed connected subgroup $H \subset G$, then $(G/H,d_{g_{T}}) \in \overline{\mathcal{M}(P)}^{GH}$.
\end{thm}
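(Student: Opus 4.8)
The plan is to obtain $(G/H,d_{g_{T}})$ as a Gromov--Hausdorff limit of metrics on $P$, by applying Theorem~\ref{thmA} and Corollary~\ref{corollaryA} to a principal connection $\theta$ on $P$ whose holonomy group equals $H$. First I would let $Q\subset P$ be a reduction of the structure group to the closed connected subgroup $H\subset G$; since $M$ and $H$ are compact and connected, $Q$ is a compact connected principal $H$-bundle over $M$. The crux of the argument — and the step where I expect the main difficulty, since it is where the hypothesis $\dim(M)\geq 2$ enters — is the following claim: \emph{there is a principal connection $\omega_{Q}\in\Omega^{1}(Q,\mathfrak{h})$ whose holonomy group at some point $u\in Q$ equals $H$.}

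To prove this claim I would argue locally and then globalize. Over a coordinate chart $U\subset M$ trivializing $Q|_{U}\cong U\times H$, a principal connection on $Q|_{U}$ is an $\mathfrak{h}$-valued $1$-form $A$ on $U$ with curvature $F_{A}=dA+\tfrac12[A,A]$. Choosing coordinates $x^{1},x^{2},\dots$ on $U$ (here $\dim(M)\geq 2$ is used), a basis $\xi_{1},\dots,\xi_{N}$ of $\mathfrak{h}$, and $A=\big(\sum_{k}a_{k}(x)\,\xi_{k}\big)dx^{2}$ with functions $a_{k}$ arranged so that $\partial_{1}a_{k}(p_{j})=\delta_{jk}$ at $N$ distinct points $p_{1},\dots,p_{N}\in U$, one gets $(F_{A})_{p_{j}}(\partial_{1},\partial_{2})=\xi_{j}$, so the curvature values of $A$ already span $\mathfrak{h}$. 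Since the difference of two principal connections on $Q$ is a tensorial $1$-form with values in the adjoint bundle, this local model (cut off away from the $p_{j}$) can be added to an arbitrary global connection on $Q$, producing $\omega_{Q}$ whose curvature realizes $\xi_{1},\dots,\xi_{N}$ among its values. By the Ambrose--Singer theorem the holonomy algebra at $u$ then contains these elements (up to the adjoint action of $H$, which preserves $\mathfrak{h}$ and the spanning property), hence it equals $\mathfrak{h}$; so the restricted holonomy group is all of $H$, and since $H$ is connected and ${\rm{Hol}}^{0}_{u}(\omega_{Q})\subset{\rm{Hol}}_{u}(\omega_{Q})\subset H$, we get ${\rm{Hol}}_{u}(\omega_{Q})=H$.

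Granting the claim, I would extend $\omega_{Q}$ to a principal connection $\theta\in\Omega^{1}(P,\mathfrak{g})$ reducible to $Q$ in the standard way, by propagating the $H$-invariant horizontal distribution $\ker\omega_{Q}\subset TQ$ under the $G$-action (cf.~\cite{kobayashi1963foundations}). Then every $\theta$-horizontal curve issuing from $u$ stays in $Q$ — along $Q$ the horizontal space of $\theta$ is $\ker\omega_{Q}$ — while conversely all of $Q$ is reached from $u$ because ${\rm{Hol}}_{u}(\omega_{Q})=H$ and $M$ is connected; hence the holonomy bundle of $\theta$ through $u$ is $P_{u}(\theta)=Q$, with ${\rm{Hol}}_{u}(\theta)=H$ a closed subgroup of $G$ by hypothesis. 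In particular $\mathcal{H}:=\ker\theta|_{Q}$ is bracket generating (Theorem~\ref{controllableconnection}), $P_{u}(\theta)=Q$ is compact, so by the Rashevsky--Chow theorem the induced Carnot--Carath\'{e}odory distance ${\rm{dist}}_{\mathcal{H}}$ metrizes the compact manifold topology of $Q$, whence ${\rm{diam}}(Q,{\rm{dist}}_{\mathcal{H}})<\infty$. Now fix $T=1$ and $f(t)=1-t$ on $[0,1)$, so $f>0$ and $\lim_{t\to T}f(t)=0$, and put $g_{t}=f(t)\pi^{\ast}g_{M}+\langle\theta,\theta\rangle$; this is a genuine Riemannian metric on $P$ for each $t\in[0,1)$, so $(P,d_{g_{t}})\in\mathcal{M}(P)$. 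Since ${\rm{Hol}}_{u}(\theta)=H$, the metric $g_{T}$ appearing in Theorem~\ref{thmA} on $G/{\rm{Hol}}_{u}(\theta)$ is precisely the metric $g_{T}$ on $G/H$ fixed in the introduction, and Theorem~\ref{thmA} (equivalently Corollary~\ref{corollaryA}) gives
\begin{equation}
d_{GH}\big((P,d_{g_{t}}),(G/H,d_{g_{T}})\big)\leq \tfrac12\,{\rm{diam}}\big(Q,{\rm{dist}}_{\mathcal{H}}\big)\,(1-t)\ \longrightarrow\ 0\qquad(t\to 1^{-}).
\end{equation}
Choosing any sequence $t_{n}\uparrow 1$ therefore exhibits $(G/H,d_{g_{T}})$ as the Gromov--Hausdorff limit of $\{(P,d_{g_{t_{n}}})\}\subset\mathcal{M}(P)$, i.e. $(G/H,d_{g_{T}})\in\overline{\mathcal{M}(P)}^{GH}$, as desired. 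The only nonroutine ingredient is the full-holonomy claim of the second paragraph; everything else is a direct application of the machinery already established in Theorem~\ref{thmA} and Corollary~\ref{corollaryA}.
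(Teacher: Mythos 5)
Your argument is correct and follows essentially the same route as the paper: the entire content of the theorem is reduced to producing a principal connection $\theta$ on $P$ with ${\rm{Hol}}_{u}(\theta)=H$, after which Theorem~\ref{thmA} with a conformal factor $f(t)\to 0$ on the base directions gives the Gromov--Hausdorff convergence $(P,d_{g_{t}})\to (G/H,d_{g_{T}})$ and hence $(G/H,d_{g_{T}})\in\overline{\mathcal{M}(P)}^{GH}$. The one structural difference is that the paper obtains the required connection by simply invoking the Nomizu reduction theorem (Theorem~\ref{Nomizu}, cited to \cite{nomizu1955reduction,nomizu1956theoreme}: for $\dim(M)\geq 2$, reducibility to a connected $H$ is equivalent to the existence of a connection with holonomy group $H$), whereas you attempt to re-prove that input from scratch. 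Everything downstream of the claim (extension of $\omega_{Q}$ to $P$, the identification $P_{u}(\theta)=Q$ and ${\rm{Hol}}_{u}(\theta)=H$, finiteness of ${\rm{diam}}(Q,{\rm{dist}}_{\mathcal{H}})$ via Rashevsky--Chow, and the choice $f(t)=1-t$ in place of the paper's $f_{n}=1/n$) matches the paper's proof.

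One step of your sketch of the holonomy claim is stated too loosely to be correct as written. Ambrose--Singer gives you, in $\mathfrak{hol}_{u}(\omega_{Q})$, the elements ${\rm{Ad}}(h_{j}^{-1})\xi_{j}$ where $h_{j}$ is determined by which point of the holonomy bundle over $p_{j}$ you evaluate the curvature at; since the $h_{j}$ may differ from point to point, the set $\{{\rm{Ad}}(h_{1}^{-1})\xi_{1},\dots,{\rm{Ad}}(h_{N}^{-1})\xi_{N}\}$ need \emph{not} span $\mathfrak{h}$ even though each ${\rm{Ad}}(h_{j}^{-1})$ preserves $\mathfrak{h}$ (e.g.\ in $\mathfrak{su}(2)$ three independent vectors can be rotated to a common line by three different adjoint maps). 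This is repairable: place all the $p_{j}$ on the $x^{1}$-axis of the chart and take $u=\sigma(0)$ for the trivializing section $\sigma$; since your local model satisfies $A(\partial_{1})=0$, the horizontal lifts of the $x^{1}$-axis segments are $\sigma$ itself, so $\sigma(p_{j})\in Q_{u}(\omega_{Q})$ and the curvature there is exactly $\xi_{j}$ with no adjoint twist. (You should also make sure the cutoff is $\equiv 1$ near the $p_{j}$, so the full connection coincides with the local model there --- the curvature of a sum of a connection and a tensorial form is not the sum of the curvatures.) With these adjustments your self-contained proof of the Nomizu input goes through; alternatively, citing it as the paper does removes the issue entirely.
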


In the above setting, if $(M,g)$ is a compact orientable Riemannian manifold, and $P = P_{{\rm{SO}}(n)}(M)$ (orthonormal frame bundle), it follows that the existence of a $G$-structure on $M$ \cite{chern1966geometry}, for some closed connected Lie subgroup $G \subset {\rm{SO}}(n)$, implies that 
\begin{equation}
({\rm{SO}}(n)/G,d_{g_{T}}) \in \overline{\mathcal{M}(P_{{\rm{SO}}(n)}(M))}^{GH}. 
\end{equation}
Thus, the presence of certain homogeneous manifolds in the GH-closure $\overline{\mathcal{M}(P_{{\rm{SO}}(n)}(M))}^{GH}$ can be detected by topological invariants (e.g. characteristic classes). Further, consider the moduli space of Riemannian structures on $P$, also known as superspace \cite{edwards1975structure}, i.e.,
\begin{equation}
\mathcal{S}(P):= {\mathcal{R}}(P)/{\rm{Diff}}(P),
\end{equation}
such that ${\mathcal{R}}(P) = {\rm{Sym}}_{+}^{2}(T^{\ast}P)$, see for instance \cite{ebin1968space}, \cite{fischer1970theory}, \cite{tuschmann2015moduli}. Under the identification $\mathcal{S}(P) \simeq \mathcal{M}(P)$, provided by $[g] \mapsto (P,d_{g})$, we can realize $\mathcal{S}(P) \subset (\mathcal{M},d_{GH})$. Thus, it follows from Theorem \ref{thmB} that the presence of certain reductive homogeneous spaces in $\overline{\mathcal{S}(P)}^{GH}$ can be detected through topological invariants provided by obstruction theory. Just to illustrate, given a compact orientable 6-manifold $M$, if $w_{3}(M) = 0$, it follows from Theorem \ref{thmB} that 
\begin{equation}
 (\mathbbm{P}^{3} = {\rm{SO}}(6)/{\rm{U}}(3), d_{g_{{\rm{FS}}}}) \in \overline{\mathcal{S}(P_{{\rm{SO}}(6)}(M))}^{GH},
\end{equation}
where $w_{3}(M) \in H^{3}(M,\mathbbm{Z}_{2})$ is the third Stiefel–Whitney class of $M$, e.g. \cite[Theorem 9]{wall1966classification} and \cite{massey1961obstructions}. In particular, since $P_{{\rm{SO}}(6)}(S^{6}) = {\rm{SO}}(7)$, we have $(\mathbbm{P}^{3}, d_{g_{{\rm{FS}}}}) \in \overline{\mathcal{S}( {\rm{SO}}(7))}^{GH}$.
\subsection*{Acknowledgments.}E. M. Correa is supported by S\~{a}o Paulo Research Foundation FAPESP grant 2022/10429-3. L. Grama research is partially supported by S\~ao Paulo Research Foundation FAPESP grants 2018/13481-0 and 2023/13131-8.

\section{Generalities on sub-Riemannian geometry}

Throughout this paper, unless otherwise stated, every smooth manifold is connected. In this section, we shall present some classical results on sub-Riemannian geometry, for more details on this subject, we refer \cite{montgomery2002tour}, \cite{agrachev2019comprehensive}, \cite{chow1940systeme}, \cite{rashevsky1938connecting}.
\begin{definition}
A sub-Riemannian structure on a smooth manifold $Q$ is a pair $(\mathcal{H},\langle -,-\rangle)$, such that $\mathcal{H} \subset TQ$ is a subbundle and $\langle -,-\rangle$ is a fiber inner product on $\mathcal{H}$.
\end{definition}

Given a subbundle $\mathcal{H} \subset TQ$, for the sake if simplicity, we shall also denote by $\mathcal{H}$ the distribution induced by $\mathcal{H}$ on $Q$. In this setting, we have the following definition.

\begin{definition}
A subbundle $\mathcal{H} \subset TQ$ is bracket-generating if 
\begin{equation}
 TQ = {\rm{Lie}}(\mathcal{H}) := {\rm{Span}} \Big \{ [X_{1}, \cdots, [X_{j-1},X_{j}]] \ \ \Big | \ \ X_{j} \in \mathcal{H}, \ \ j \in \mathbbm{N}\Big \}.
\end{equation}
\end{definition}

\begin{definition}
Let $(Q,\mathcal{H},\langle -,-\rangle)$ be a sub-Riemannian manifold. Given $a,b \in Q$, if there exists a $\mathcal{H}$-horizontal curve $\gamma \colon [0,1] \to Q$, such that $\gamma(0) = a$ and $\gamma(1) = b$, then the sub-Riemannian distance (a.k.a. Carnot-Carath\'{e}odory distance) between $a$ and $b$ is defined by 
\begin{equation}
{\rm{dist}}_{\mathcal{H}}(a,b) = \inf \Bigg \{ \int_{0}^{1}||\dot{\gamma}(s)||{\rm{d}}s \ \ \big | \ \ \gamma \ \text{is a} \ \mathcal{H}\text{-horizontal curve joining} \ a \ {\text{and}} \ b\Bigg \}.
\end{equation}
If there is no $\mathcal{H}$-horizontal curve joining $a$ and $b$, we set $d_{\mathcal{H}}(a,b) = +\infty$.
\end{definition}

\begin{theorem}[Chow]
If a distribution $\mathcal{H} \subset TQ$ is bracket-generating, then every point of $Q$ can be connected by a $\mathcal{H}$-horizontal curve.
\end{theorem}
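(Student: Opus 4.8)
The plan is to prove the (equivalent, since $Q$ is connected) assertion that \emph{any two points of $Q$ can be joined by a piecewise-smooth $\mathcal{H}$-horizontal curve}, by showing that the accessibility relation has open equivalence classes. Fix $q_{0}\in Q$ and let $\mathcal{A}(q_{0})\subset Q$ be the set of endpoints of piecewise-smooth $\mathcal{H}$-horizontal curves issuing from $q_{0}$; equivalently, the set of points reachable from $q_{0}$ by a finite concatenation of integral curves of local smooth sections of $\mathcal{H}$. Since such curves may be run backwards and concatenated, $q\sim q'\iff q'\in\mathcal{A}(q)$ is an equivalence relation, so $Q=\bigsqcup_{\alpha}\mathcal{A}(q_{\alpha})$. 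If each accessibility set is open, then this is a partition of the connected space $Q$ into disjoint open sets, whence $\mathcal{A}(q_{0})=Q$, which is exactly the claim. So everything reduces to: $\mathcal{A}(q_{0})$ is open.

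The core of the argument is a flow-commutator computation. For $X,Y\in\Gamma(\mathcal{H})$ with flows $\phi^{X}_{s},\phi^{Y}_{t}$, set
\[
c_{X,Y}(\tau):=\phi^{Y}_{-\tau}\circ\phi^{X}_{-\tau}\circ\phi^{Y}_{\tau}\circ\phi^{X}_{\tau}(q).
\]
A local-coordinate Taylor expansion gives $c_{X,Y}(\tau)=q+\tau^{2}[X,Y]_{q}+o(\tau^{2})$, so $s\mapsto c_{X,Y}(\sqrt{s})$ is a $C^{1}$ curve through $q$ with initial velocity $[X,Y]_{q}$; iterating these nested commutators of flows (with a correspondingly higher root reparametrization) produces, for any iterated bracket $Z=[X_{1},[X_{2},\dots,X_{j}]]$ of sections of $\mathcal{H}$, a reparametrized curve $s\mapsto\gamma_{Z}(s)$ with $\gamma_{Z}(0)=q$ and $\gamma_{Z}'(0)=Z_{q}$. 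Crucially, every point on $\gamma_{Z}$ lies in $\mathcal{A}(q)$, since $\gamma_{Z}$ is a concatenation of integral-curve arcs of sections of $\mathcal{H}$. Now use the bracket-generating hypothesis: at $q_{0}$ we may pick iterated brackets $Z_{1},\dots,Z_{n}$ of sections of $\mathcal{H}$ such that $Z_{1}(q_{0}),\dots,Z_{n}(q_{0})$ is a basis of $T_{q_{0}}Q$, where $n=\dim Q$. Assemble the map
\[
\Psi(t_{1},\dots,t_{n}):=\big(\gamma_{Z_{n}}(t_{n})\circ\cdots\circ\gamma_{Z_{1}}(t_{1})\big)(q_{0}),
\]
defined for $(t_{1},\dots,t_{n})$ near $0$; it is $C^{1}$ with $d\Psi_{0}$ sending the standard basis to $Z_{1}(q_{0}),\dots,Z_{n}(q_{0})$, hence invertible. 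By the inverse function theorem $\Psi$ is a local $C^{1}$-diffeomorphism onto a neighborhood of $q_{0}$, and $\im\Psi\subset\mathcal{A}(q_{0})$ by construction; thus $\mathcal{A}(q_{0})$ contains a neighborhood of $q_{0}$. Running this at an arbitrary $q\in\mathcal{A}(q_{0})$ and using $\mathcal{A}(q)=\mathcal{A}(q_{0})$ shows $\mathcal{A}(q_{0})$ is open, completing the proof.

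The step I expect to be the main obstacle is the regularity bookkeeping in the flow-commutator construction: the honest composition of flows is smooth in the time-parameters but its differential at the origin is degenerate precisely in the bracket directions, so one must pass to square-root (and higher-root) reparametrizations, after which the composite maps are only $C^{1}$ — one has to verify that exactly the $C^{1}$-regularity needed for the inverse function theorem survives, and that the first-order Taylor coefficient of the (multi-)commutator of flows really reproduces the prescribed iterated Lie bracket. This is the classical Baker–Campbell–Hausdorff-type estimate. Alternatively, this technical heart can be outsourced wholesale to Sussmann's orbit theorem: the orbit of the family $\Gamma(\mathcal{H})$ through $q_{0}$ is an immersed submanifold of $Q$ whose tangent space at each point contains the evaluation of ${\rm{Lie}}(\mathcal{H})$ there, which by hypothesis is all of $TQ$; hence the orbit is open, and the connectedness argument above finishes the proof without any explicit flow computation.
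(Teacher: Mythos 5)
The paper does not prove this statement at all --- it is quoted as a classical result (Chow/Rashevsky), and the only proof mechanism the paper itself sets up is the one in your closing paragraph: Sussmann's orbit theorem plus the inclusion ${\rm{Lie}}(\mathcal{F})_{x}\subseteq T_{x}(\mathcal{O}(q_{0}))$, which under the bracket-generating hypothesis forces the orbit to be open and hence, by connectedness, all of $Q$ (this is exactly the paper's Theorem 3.4). So your fallback route is precisely the paper's route, and it is complete modulo the orbit theorem, which the paper takes as given. Your primary, self-contained argument is the classical direct proof and is correct in outline, but it has one genuine wrinkle beyond the ``regularity bookkeeping'' you flag: the reparametrized commutator curves $s\mapsto c_{X,Y}(\sqrt{s})$ are defined only for $s\geq 0$, so $\Psi$ is defined on an orthant (or one must separately realize $-[X,Y]_{q}$ via $c_{Y,X}$), and the inverse function theorem as you invoke it does not directly yield a neighborhood of $q_{0}$ in the image --- the standard conclusion is only that ${\rm{im}}\,\Psi$ contains \emph{some} nonempty open set $U\subset\mathcal{A}(q_{0})$, possibly not containing $q_{0}$. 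This is repairable by a standard extra step: $\mathcal{A}(q_{0})$ is invariant under the local flows of sections of $\mathcal{H}$, which are local diffeomorphisms carrying $U$ onto open neighborhoods of any prescribed point of $\mathcal{A}(q_{0})$, whence $\mathcal{A}(q_{0})$ is open and the connectedness argument closes the proof. With that patch (or by simply committing to the orbit-theorem version), your proposal is a valid proof; the direct version buys self-containedness at the cost of the BCH-type estimates, while the orbit-theorem version is the one consistent with how the paper actually uses the result.
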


\begin{theorem}[Rashevsky-Chow]
Let $(Q,\mathcal{H},\langle -,-\rangle)$ be a sub-Riemannian manifold, such that $\mathcal{H} \subset TQ$ is bracket-generating, then:
\begin{enumerate}
\item[(1)] $(Q,{\rm{dist}}_{\mathcal{H}})$ is a metric space,
\item[(2)] the topology induced by $(Q,d_{\mathcal{H}})$ is equivalent to the manifold topology.
\end{enumerate}
In particular, $d_{\mathcal{H}} \colon Q\times Q \to \mathbbm{R}$ is continuous.
\end{theorem}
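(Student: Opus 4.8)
The plan is to derive both assertions from Chow's theorem, a comparison with an auxiliary Riemannian metric, and the local accessibility estimate underlying Chow's theorem itself. First, using a partition of unity I would fix a complementary subbundle $\mathcal{V}\subset TQ$ with $TQ=\mathcal{H}\oplus\mathcal{V}$, an arbitrary fibre inner product $g_{\mathcal{V}}$ on $\mathcal{V}$, and set $g:=\langle-,-\rangle\oplus g_{\mathcal{V}}$; this is a genuine Riemannian metric on $Q$ with $g|_{\mathcal{H}}=\langle-,-\rangle$, so any $\mathcal{H}$-horizontal curve $\gamma$ satisfies $\|\dot\gamma\|_{\langle-,-\rangle}=\|\dot\gamma\|_{g}$ and its sub-Riemannian length coincides with its $g$-length. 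Taking infima over $\mathcal{H}$-horizontal curves joining $a$ to $b$ gives $\mathrm{dist}_{\mathcal{H}}(a,b)\geq\mathrm{dist}_{g}(a,b)$ for all $a,b\in Q$. By Chow's theorem such curves always exist, hence $\mathrm{dist}_{\mathcal{H}}<+\infty$; symmetry follows by reversing a curve, the triangle inequality by concatenation and reparametrisation, $\mathrm{dist}_{\mathcal{H}}(a,a)=0$ from the constant curve, and $\mathrm{dist}_{\mathcal{H}}(a,b)=0\Rightarrow a=b$ from $\mathrm{dist}_{\mathcal{H}}\geq\mathrm{dist}_{g}$ and the fact that $\mathrm{dist}_{g}$ is a distance. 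This proves (1).

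For (2), the same comparison yields $B_{\mathrm{dist}_{\mathcal{H}}}(p,r)\subseteq B_{\mathrm{dist}_{g}}(p,r)$ for all $p\in Q$ and $r>0$, and since the $\mathrm{dist}_{g}$-balls form a basis of the manifold topology, every manifold-open set is $\mathrm{dist}_{\mathcal{H}}$-open. For the reverse inclusion I must show each ball $B_{\mathrm{dist}_{\mathcal{H}}}(p,r)$ is manifold-open; by the triangle inequality this reduces to the local claim that for every $q_{0}\in Q$ and every $\varepsilon>0$ there is a manifold-neighbourhood $U$ of $q_{0}$ with $\mathrm{dist}_{\mathcal{H}}(q_{0},q)<\varepsilon$ for all $q\in U$ (equivalently, $q\mapsto\mathrm{dist}_{\mathcal{H}}(p,q)$ is upper semicontinuous for the manifold topology). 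Granting this, the two topologies coincide, and $\mathrm{dist}_{\mathcal{H}}\colon Q\times Q\to\mathbbm{R}$ is then continuous, being $1$-Lipschitz for the product of its own metric topology, which by (2) is the manifold topology.

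The local claim is the genuinely sub-Riemannian input and is where I expect the main difficulty to lie; I would establish it by the flow-and-bracket argument behind Chow's theorem. Pick local horizontal vector fields $X_{1},\dots,X_{m}$ framing $\mathcal{H}$ near $q_{0}$ and, by bracket-generation, iterated Lie brackets $Y_{1},\dots,Y_{n}$ of the $X_{i}$ (with $n=\dim Q$) whose values form a basis of $T_{q_{0}}Q$. The map $\varphi(t_{1},\dots,t_{n}):=\big(\Phi^{t_{n}}_{Y_{n}}\circ\cdots\circ\Phi^{t_{1}}_{Y_{1}}\big)(q_{0})$ has $\partial_{t_{i}}\varphi(0)=Y_{i}(q_{0})$, hence is a local diffeomorphism at $0$; replacing each flow $\Phi_{Y_{i}}$ by a concatenation of flows of the $X_{i}$ that realises $Y_{i}$ up to higher order (for a weight-two bracket, $\Phi^{-s}_{X}\Phi^{-s}_{Y}\Phi^{s}_{X}\Phi^{s}_{Y}$ moves by $s^{2}[X,Y]+O(s^{3})$, and one iterates for longer brackets) and reparametrising each block according to the bracket length, one obtains a map that is still open at the origin, so its image contains a manifold-neighbourhood $U$ of $q_{0}$. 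Every point of $U$ is reached from $q_{0}$ by a concatenation of horizontal flow segments whose total $g$-length depends continuously on the parameters and vanishes at the origin; shrinking $U$ forces this length below $\varepsilon$, whence $\mathrm{dist}_{\mathcal{H}}(q_{0},q)<\varepsilon$ on $U$. The delicate point is the verification that the reparametrised flow map remains open at the origin (the classical ball-box type estimate), whereas everything else is formal once the auxiliary metric $g$ is in place.
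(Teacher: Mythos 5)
The paper does not prove this statement: it is quoted as the classical Rashevsky--Chow theorem, with the proof delegated to the references (\cite{montgomery2002tour}, \cite{agrachev2019comprehensive}, \cite{chow1940systeme}, \cite{rashevsky1938connecting}), so there is no in-paper argument to compare yours against. Your outline is the standard textbook proof and its architecture is sound: the auxiliary Riemannian extension $g$ of $\langle-,-\rangle$ gives $\mathrm{dist}_{\mathcal{H}}\geq\mathrm{dist}_{g}$, which together with Chow's theorem yields all the metric axioms and the inclusion of the manifold topology in the $\mathrm{dist}_{\mathcal{H}}$-topology; the reverse inclusion and the continuity claim reduce, exactly as you say, to the local accessibility estimate (upper semicontinuity of $q\mapsto\mathrm{dist}_{\mathcal{H}}(q_{0},q)$ at $q_{0}$).

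One caveat on that last step. After you replace each $\Phi_{Y_{i}}$ by a commutator of horizontal flows and reparametrise by $t_{i}\mapsto\mathrm{sgn}(t_{i})|t_{i}|^{1/w_{i}}$ according to the bracket weight $w_{i}$, the resulting endpoint map $\psi$ is continuous but in general not differentiable at the origin, so its openness at $0$ cannot be read off from the inverse function theorem in the way the phrase ``still open at the origin'' might suggest. What one actually has is $\psi(t)=\varphi(t)+o(\|t\|)$ in the weighted sense, and openness must be extracted either by a Brouwer-degree or fixed-point comparison of $\psi$ with the local diffeomorphism $\varphi$, or by the alternative induction (as in Montgomery's or Agrachev--Barilari--Boscain's treatments) that composes only horizontal flows and differentiates at a generic nonzero parameter value rather than at $0$. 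You correctly identify this as the delicate point, but note that it is not a peripheral technicality: it is essentially the entire analytic content of part (2), and the argument is not complete until it is supplied. Everything else in your write-up (the metric axioms, the one-sided topology comparison, the length control $\sim|t_{i}|^{1/w_{i}}\to 0$, and the deduction of continuity from the $1$-Lipschitz property) is correct and formal.
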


\section{Geometric control theory and principal connections}
In this section, we shall present some basic results related with geometric control theory, with focus on the orbit theorem \cite{jurdjevic1997geometric}, \cite{agrachev2013control}, \cite{sussmann1973orbits}. After that, we recall some classical results and concepts on the theory of principal connections and holonomy, the main purpose is to introduce the notion of controllable connections. The subject will be presented according to the following references \cite{rudolph2017differential}, \cite{kolar2013natural}, \cite{kobayashi1963foundations}, \cite{ambrose1953theorem}.
\begin{definition}
A control system on a smooth manifold $Q$ is defined by an arbitrary family $\mathcal{F}$ of smooth vector fields on $Q$.
\end{definition}

Given an arbitrary family $\mathcal{F}$ of smooth vector fields on $Q$, let us denote by $G(\mathcal{F})$ the pseudogroup of (local) diffeomorphisms generated by the flows of elements of $\mathcal{F}$, i.e., if $\Phi \in G(\mathcal{F})$, then
\begin{equation}
\Phi = \Phi_{t_{1}}^{X_{1}} \circ \cdots \circ \Phi_{t_{k}}^{X_{k}},
\end{equation}
such that $X_{j} \in \mathcal{F}$, for all $j = 1, \ldots,k$, $k \in \mathbbm{N}$, and $t_{j} \in \mathbbm{R}$ are chosen so that the composition make sense. In this last setting, given $q_{0} \in Q$, let us denote by $\mathcal{O}(q_{0})$ the orbit of $G(\mathcal{F})$ through $q_{0}$.

\begin{theorem}[Sussmann] Let $\mathcal{F}$ be an arbitrary family of smooth vector fields on $Q$ and $q_{0} \in Q$. Then:
\begin{enumerate}
\item[(1)] $\mathcal{O}(q_{0})$ is an immersed submanifold of $Q$,
\item[(2)] $T_{x}(\mathcal{O}(q_{0})) = {\rm{Span}} \big \{ (\Phi_{\ast}X)(x) \ | \ \Phi \in G(\mathcal{F}), \ X \in \mathcal{F}\}$.
\end{enumerate}
\end{theorem}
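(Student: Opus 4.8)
The plan is to realize $\mathcal{O}(q_{0})$ as an integral manifold of the (\emph{a priori} singular) distribution
\begin{equation}
\Delta_{x} \ce {\rm{Span}} \big \{ (\Phi_{\ast}X)(x) \ \big | \ \Phi \in G(\mathcal{F}), \ X \in \mathcal{F} \big \}, \qquad x \in Q,
\end{equation}
and to equip it by hand with a manifold structure via explicit charts built from flows. One first records that $\mathcal{F}$ itself consists of fields of this type (take $\Phi = {\rm{id}}$), so $\mathcal{F} \subset \Delta$, and, more importantly, that $\Delta$ is $G(\mathcal{F})$-invariant: for $\Psi \in G(\mathcal{F})$ defined near $x$ one has $d\Psi_{x}\big( (\Phi_{\ast}X)(x) \big) = \big( (\Psi \circ \Phi)_{\ast}X \big)(\Psi(x))$ with $\Psi \circ \Phi \in G(\mathcal{F})$, so $d\Psi_{x}(\Delta_{x}) \subset \Delta_{\Psi(x)}$, and equality follows by applying this to $\Psi^{-1}$. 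Since $\Psi$ restricts to a diffeomorphism, $x \mapsto \dim \Delta_{x}$ is therefore constant along each orbit; put $k \ce \dim \Delta_{q_{0}}$.

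Next, I would build the charts. Fix $x_{0} \in \mathcal{O}(q_{0})$ and choose vector fields $Z_{1}, \dots, Z_{k}$, each of the form $Z_{i} = (\Phi_{i})_{\ast}X_{i}$ with $\Phi_{i} \in G(\mathcal{F})$, $X_{i} \in \mathcal{F}$, whose values $Z_{1}(x_{0}), \dots, Z_{k}(x_{0})$ form a basis of $\Delta_{x_{0}}$. Because the flow of $(\Phi_{i})_{\ast}X_{i}$ is $\Phi_{i} \circ \Phi^{X_{i}}_{s} \circ \Phi_{i}^{-1}$, each $\Phi^{Z_{i}}_{s}$ again belongs to $G(\mathcal{F})$. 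Define
\begin{equation}
\psi(t_{1}, \dots, t_{k}) \ce \Phi^{Z_{1}}_{t_{1}} \circ \cdots \circ \Phi^{Z_{k}}_{t_{k}}(x_{0})
\end{equation}
on a small ball $U$ around $0 \in \mathbbm{R}^{k}$. Since $d\psi_{0}(e_{i}) = Z_{i}(x_{0})$, the map $\psi$ is an immersion near $0$, hence, after shrinking $U$, an embedding onto a $k$-dimensional submanifold $N_{x_{0}} \subset \mathcal{O}(q_{0})$. Differentiating $\psi$ in the $i$-th variable gives $\partial_{t_{i}}\psi(t) = (g_{t \, \ast}Z_{i})(\psi(t))$, where $g_{t} = \Phi^{Z_{1}}_{t_{1}} \circ \cdots \circ \Phi^{Z_{i-1}}_{t_{i-1}} \in G(\mathcal{F})$; thus $\partial_{t_{i}}\psi(t) = \big( (g_{t} \circ \Phi_{i})_{\ast}X_{i}\big)(\psi(t)) \in \Delta_{\psi(t)}$, so $T_{y}N_{x_{0}} \subset \Delta_{y}$ for every $y \in N_{x_{0}}$, and since both spaces have dimension $k$ (constancy of $\dim\Delta$ along the orbit), $T_{y}N_{x_{0}} = \Delta_{y}$. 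Performed at an arbitrary point of $\mathcal{O}(q_{0})$, this already gives statement $(2)$ once the $N_{x}$ are recognized as charts.

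Finally, I would glue. Running the construction through every $x \in \mathcal{O}(q_{0})$ yields embedded $k$-disks $N_{x} \subset \mathcal{O}(q_{0})$ covering the orbit with $TN_{x} = \Delta|_{N_{x}}$; one topologizes $\mathcal{O}(q_{0})$ by declaring $V$ open iff $V \cap N_{x}$ is open in $N_{x}$ for all $x$, and takes the $\psi^{-1}$ as charts. The point is to see that whenever $y \in N_{x} \cap N_{x'}$ the disks $N_{x}$ and $N_{x'}$ coincide on a neighborhood of $y$, which makes the transition maps smooth. This is where the $G(\mathcal{F})$-invariance of $\Delta$ is used: picking a basis $W_{1}, \dots, W_{k}$ of $\Delta_{y}$ of the form $W_{j} = (\Psi_{j})_{\ast}Y_{j}$, $\Psi_{j}\in G(\mathcal{F})$, $Y_{j}\in\mathcal{F}$ (possible by definition of $\Delta_{y}$), the relation $W_{j}(z) \in \Delta_{z}$ forces $W_{j}$ to be tangent to $N_{x}$ (since $\Delta_{z} = T_{z}N_{x}$ on $N_{x}$) and, likewise, to $N_{x'}$; hence the flows $\Phi^{W_{j}}_{s}$ preserve both disks near $y$, and the small cube they sweep out from $y$ lies in $N_{x} \cap N_{x'}$ and is open in each. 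Thus $\mathcal{O}(q_{0})$ becomes a smooth $k$-manifold; by construction the inclusion into $Q$ is an injective immersion, which is $(1)$, and $\mathcal{O}(q_{0})$ is connected because every $\Phi(q_{0})$ is joined to $q_{0}$ by a concatenation of flow segments, a path in the manifold topology.

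The step I expect to be the main obstacle is this last gluing: $\Delta$ need not be of locally constant rank on $Q$ — it is so only along orbits — so one cannot simply quote the Frobenius theorem, and the compatibility of the local plaques must be squeezed out of the pseudogroup invariance of $\Delta$ (equivalently, out of the fact that the flows of the generating fields $\Phi_{\ast}X$ permute the plaques). Granting that, the immersion property, the tangent-space identity $(2)$, and connectedness are all routine.
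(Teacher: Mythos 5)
The paper does not actually prove this statement: Sussmann's orbit theorem is quoted there as a known result, with references to Sussmann's original paper and to the textbooks of Jurdjevic and Agrachev--Sachkov, so there is no in-paper argument to compare yours against. Your proposal is, in substance, the standard proof of the orbit theorem (essentially the one in Agrachev--Sachkov, Ch.\ 5, going back to Sussmann): introduce the singular distribution $\Delta_x = {\rm{Span}}\{(\Phi_{\ast}X)(x)\}$, prove its $G(\mathcal{F})$-invariance and hence the constancy of $\dim\Delta$ along orbits, build plaques by composing flows of the fields $(\Phi_i)_{\ast}X_i$, identify their tangent spaces with $\Delta$, and glue via the plaque topology. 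All the key computations you record ($d\psi_0(e_i)=Z_i(x_0)$, $\partial_{t_i}\psi(t)=((g_t\circ\Phi_i)_{\ast}X_i)(\psi(t))$, and the local coincidence of overlapping plaques via flows of fields $W_j$ tangent to both) are correct, and you correctly identify the gluing step as the place where one cannot invoke Frobenius and must instead use the pseudogroup invariance of $\Delta$.

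The only items left implicit are routine but worth naming if this were written out in full: (i) the plaque topology must be checked to be Hausdorff (it is, since the inclusion into $Q$ is continuous and $Q$ is Hausdorff) and second countable (a standard but not entirely free argument, as for leaves of foliations); (ii) the fact that a vector field whose values lie in $T_zN$ for all $z$ in an embedded disk $N$ has a local flow preserving $N$ should be cited or proved, since it underlies both the plaque-compatibility step and the connectedness claim; and (iii) one should note that rank lower semicontinuity is what upgrades "$d\psi_0$ injective" to "$\psi$ is an immersion near $0$". None of these affects the correctness of the argument.
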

From the above theorem, it can be shown that 
\begin{equation}
{\rm{Lie}}(\mathcal{F})_{x} \subseteq  T_{x}(\mathcal{O}(q_{0})),
\end{equation}
for every $x \in \mathcal{O}(q_{0})$. For us it will be important the following result.
\begin{theorem}[Rashevsky-Chow] 
\label{RC-orbit}
If $\mathcal{F}$ is bracket-generating, then $\mathcal{O}(q) = Q$, for every $q \in Q$.
\end{theorem}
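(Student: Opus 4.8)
The plan is to deduce the statement directly from Sussmann's orbit theorem stated above, combined with the standing convention of this section that every smooth manifold (in particular $Q$) is connected.

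First I would note that the orbits of $G(\mathcal{F})$ form a partition of $Q$. Indeed, declaring $x \sim y$ whenever $y = \Phi(x)$ for some $\Phi \in G(\mathcal{F})$ defines an equivalence relation: reflexivity uses the identity (e.g. $\Phi_{0}^{X}$), symmetry uses that the inverse of $\Phi_{t_{1}}^{X_{1}} \circ \cdots \circ \Phi_{t_{k}}^{X_{k}}$ is $\Phi_{-t_{k}}^{X_{k}} \circ \cdots \circ \Phi_{-t_{1}}^{X_{1}} \in G(\mathcal{F})$, and transitivity uses closure of $G(\mathcal{F})$ under composition. Hence $Q = \bigsqcup_{i \in I} \mathcal{O}(q_{i})$ for a suitable family $\{q_{i}\}_{i \in I}$ of representatives.

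Next I would use the bracket-generating hypothesis to show that each orbit is open. Fix $q_{0} \in Q$. By Sussmann's theorem, $\mathcal{O}(q_{0})$ is an immersed submanifold of $Q$ with $T_{x}(\mathcal{O}(q_{0})) = {\rm{Span}}\{ (\Phi_{\ast}X)(x) \ | \ \Phi \in G(\mathcal{F}), \ X \in \mathcal{F}\}$ for every $x \in \mathcal{O}(q_{0})$, and, as observed immediately after that theorem, ${\rm{Lie}}(\mathcal{F})_{x} \subseteq T_{x}(\mathcal{O}(q_{0}))$. Since $\mathcal{F}$ is bracket-generating, ${\rm{Lie}}(\mathcal{F})_{x} = T_{x}Q$, so $T_{x}(\mathcal{O}(q_{0})) = T_{x}Q$ for all $x \in \mathcal{O}(q_{0})$; thus the inclusion $\iota \colon \mathcal{O}(q_{0}) \hookrightarrow Q$ is an injective immersion between manifolds of equal dimension, hence a local diffeomorphism, hence an open map. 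Therefore $\mathcal{O}(q_{0})$ is an open subset of $Q$.

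Finally I would conclude by connectedness: $Q$ is the disjoint union of the open sets $\mathcal{O}(q_{i})$, $i \in I$, and since $Q$ is connected this forces $|I| = 1$, i.e. $\mathcal{O}(q) = Q$ for every $q \in Q$. I expect the only delicate point to be the passage from ``$T_{x}(\mathcal{O}(q_{0})) = T_{x}Q$ for all $x$'' to ``$\mathcal{O}(q_{0})$ is open'': one must invoke the fact, built into the orbit theorem, that the orbit carries an intrinsic (second countable) manifold structure for which $\iota$ is an injective immersion, so that equality of dimensions upgrades $\iota$ to a local diffeomorphism via the inverse function theorem. Everything else in the argument is formal.
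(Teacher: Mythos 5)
Your argument is correct and is exactly the route the paper intends: the theorem is stated there without proof as a classical fact, but the immediately preceding Sussmann orbit theorem and the remark that ${\rm{Lie}}(\mathcal{F})_{x} \subseteq T_{x}(\mathcal{O}(q_{0}))$ are set up precisely so that the bracket-generating hypothesis makes each orbit open, and connectedness of $Q$ (the standing convention of Section 2) forces a single orbit. The one delicate point you flag—upgrading the injective immersion $\mathcal{O}(q_{0}) \hookrightarrow Q$ to a local diffeomorphism once the tangent spaces fill out $T_{x}Q$—is handled correctly via the inverse function theorem.
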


Let us recall some standard results related with the theory of Cartan-Ehresman connections on principal bundles. Let $M$ be a smooth manifold and $G$ a Lie group. A smooth fiber bundle $\pi \colon P \to M$ with fiber $G$ is a principal $G$-bundle if $G$ acts smoothly and freely on $P$ on the right and the fiber-preserving local trivializations
\begin{equation}
\phi_{U} \colon \pi^{-1}(U) \to U \times G,
\end{equation}
are $G$-equivariant, where $G$ acts on $U \times G$ on the right by $(x,h) \cdot g = (x,hg)$. Denoting by $\mathfrak{g}$ the Lie algebra of $G$, we have the following definition.
\begin{definition}
A Cartan-Ehresman connection on a principal $G$-bundle $G \hookrightarrow P \to M$ is defined by a $\mathfrak{g}$-valued $1$-form $\theta \colon TP \to \mathfrak{g}$ on $P$, satisfying the following
\begin{equation}
\label{connection}
R_{g}^{\ast}\theta = {\rm{Ad}}(g^{-1})\theta, \ \forall g \in G, \ \ \   \theta(\xi_{\ast}) = \xi, \ \forall \xi \in \mathfrak{g},
\end{equation}
where $\xi_{\ast}$ is the Killing vector field generated by $\xi \in \mathfrak{g}$, and $R_{g}(x) = xg$, $\forall x \in P$ and $\forall g \in G$.
\end{definition}

In the above setting, we have the decomposition 
\begin{equation}
TP = H(P) \oplus V(P),
\end{equation}
such that $H(P) = \ker(\theta)$ and, for every $x \in P$, $V(P)_{x} = j_{x \ast}(\mathfrak{g})$, where $j_{x} \colon G \to P$ is the map defined by $j_{x}(g) = xg$, $\forall g \in G$. Moreover, from Eq. (\ref{connection}), we have
\begin{equation}
H(P)_{xg} = R_{g\ast}H(P)_{x},
\end{equation}
$\forall x \in P$ and $\forall g \in G$. In this last setting, a curve $\gamma \colon I \to P$ is said to be $\theta$-horizontal if 
\begin{equation}
\dot{\gamma}(t) \in H(P)_{\gamma(t)}, \ \ \forall t \in I.
\end{equation}
Given a principal $G$-bundle $G \hookrightarrow P \to M$, fixed a principal connection $\theta \in \Omega^{1}(P,\mathfrak{g})$, we have the following well-known results.

\begin{proposition}
Let $\alpha \colon [0,1] \to M$ be a smooth curve and let $X \colon U \to TM$ be a smooth vector field, such that $U\subseteq M$ is an open set. Then, the following holds:
\begin{enumerate}
\item[(i)] For every $x \in \pi^{-1}(\alpha(0))$, there exists a unique smooth curve $\alpha_{x}^{h} \colon [0,1] \to P$, such that 
\begin{equation}
\alpha_{x}^{h}(0) = x, \ \ \ \dot{\alpha}_{x}^{h}(t) \in H(P)_{\alpha_{x}^{h}(t)}, \ \ \text{and} \ \ \pi(\alpha_{x}^{h}(t)) = \alpha(t), \ \ \forall t \in [0,1];
\end{equation}
\item[(ii)] There exists a unique smooth vector field $X^{H} \colon \pi^{-1}(U) \to TP$, such that 
\begin{equation}
X^{H}(x) \in H(P)_{x}, \ \ \text{and} \ \ \pi_{\ast}(X^{H}(x))=X(x), \forall x \in \pi^{-1}(U).
\end{equation}
\end{enumerate}
\end{proposition}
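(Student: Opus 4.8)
The plan is to treat (ii) first, where the lift is defined pointwise, and then reduce (i) to an ordinary differential equation on the structure group.

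For (ii): for each $x\in\pi^{-1}(U)$, since $\pi$ is a submersion with $\ker(\pi_{\ast,x})=V(P)_{x}$ and $T_{x}P=H(P)_{x}\oplus V(P)_{x}$, the restriction $\pi_{\ast,x}\colon H(P)_{x}\to T_{\pi(x)}M$ is a linear isomorphism, so I would define $X^{H}(x):=(\pi_{\ast,x}|_{H(P)_{x}})^{-1}(X(\pi(x)))$. The two required identities then hold by construction, and any vector satisfying both must equal this one, which gives uniqueness. For smoothness I would pass to a $G$-equivariant trivialization $\phi_{V}\colon\pi^{-1}(V)\xrightarrow{\sim}V\times G$ over $V\subseteq U$, with local section $\sigma_{V}$ and local connection $1$-form $A:=\sigma_{V}^{\ast}\theta\in\Omega^{1}(V,\mathfrak{g})$: a short computation from the defining relations (\ref{connection}) shows the horizontal lift of $v\in T_{y}V$ at $(y,g)$ to be $(v,-(R_{g})_{\ast}A_{y}(v))$, so in this chart $X^{H}$ is $(y,g)\mapsto(X(y),-(R_{g})_{\ast}A_{y}(X(y)))$, which is manifestly smooth.

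For (i), I would first settle uniqueness: two horizontal lifts of $\alpha$ with the same initial value satisfy, in any such trivialization, the first-order ODE $\dot g(t)=-(R_{g(t)})_{\ast}A_{\alpha(t)}(\dot\alpha(t))$ with the same initial condition, so they agree on the preimage of every trivializing open set, hence everywhere, since $[0,1]$ is connected. For existence I would pull the bundle back along $\alpha$: the principal $G$-bundle $\alpha^{\ast}P\to[0,1]$ is trivial, and in a trivialization $\alpha^{\ast}P\cong[0,1]\times G$ the connection $\alpha^{\ast}\theta$ takes the standard gauge form $\Ad(g^{-1})(a(t)\,\mathrm{d}t)+\omega_{\mathrm{MC}}$ for a smooth $a\colon[0,1]\to\mathfrak{g}$ (with $\omega_{\mathrm{MC}}$ the left Maurer--Cartan form); the horizontal lift of the curve $t\mapsto t$ starting at the point corresponding to $x$ is $t\mapsto(t,g(t))$ with $\dot g(t)=-(R_{g(t)})_{\ast}a(t)$ and $g(0)$ prescribed. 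Transporting this back to $P$ produces $\alpha_{x}^{h}$, with $\pi\circ\alpha_{x}^{h}=\alpha$ built in and smoothness inherited from smooth dependence of the solution on $t$. (Equivalently, one can cover $[0,1]$ by finitely many subintervals over which $\alpha$ lands in a trivializing open set and patch the local solutions using uniqueness.)

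The step I expect to be the main obstacle is guaranteeing that the ODE $\dot g(t)=-(R_{g(t)})_{\ast}a(t)$ admits a solution on the whole of $[0,1]$ --- equivalently, that a horizontal lift cannot leave every compact subset of the fibre in finite time --- since no compactness of $G$ is assumed at this stage. I would handle this via right-invariance: the time-dependent vector field $g\mapsto-(R_{g})_{\ast}a(t)$ is right-invariant, so its non-autonomous flow from $s$ to $t$, where defined, is left translation by the single element $\Psi_{s,t}(e)$; hence every solution persists exactly as long as the one through the identity does, and the latter --- for a matrix group --- solves a linear ODE with continuous coefficients and is therefore global by Gronwall's inequality, the general case following by the same left-translation argument on short time intervals. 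Failing that, one may simply invoke the classical global existence of horizontal lifts for principal connections (e.g. Kobayashi--Nomizu). Everything else --- the identification of $H(P)_{x}$, the ODE form of the lift, uniqueness, and smoothness --- is routine.
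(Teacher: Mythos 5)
Your proof is correct. The paper itself offers no proof of this proposition --- it is listed among the ``well-known results'' on principal connections and implicitly deferred to the cited references (Kobayashi--Nomizu, Rudolph--Schmidt, Kol\'a\v{r} et al.) --- and your argument is exactly the classical one found there: the pointwise isomorphism $\pi_{\ast}|_{H(P)_{x}} \colon H(P)_{x} \to T_{\pi(x)}M$ for (ii), reduction of (i) to the right-invariant ODE $\dot g(t) = -(R_{g(t)})_{\ast}A_{\alpha(t)}(\dot\alpha(t))$ in a local trivialization, and the left-translation/uniform-time argument to rule out finite-time escape in the fibre, which is the one genuinely non-routine point and which you handle correctly.
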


From item (i) of the above proposition, we obtain the following result.
 
\begin{proposition}
Let $\alpha \colon [0,1] \to M$ be a piecewise smooth curve. Then, the map
\begin{equation}
\tau_{\alpha} \colon \pi^{-1}(\alpha(0)) \to \pi^{-1}(\alpha(1)), \ \tau_{\alpha}(x)= \alpha_{x}^{h}(1), \forall x\in \pi^{-1}(\alpha(0)),
\end{equation}
is a $G$-equivariant diffeomorphism.
\end{proposition}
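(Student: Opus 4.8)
I would establish the four required properties of $\tau_\alpha$ in turn — well-definedness, $G$-equivariance, bijectivity, and smoothness of $\tau_\alpha$ together with its inverse — and I would first reduce everything to the case of a \emph{smooth} curve. For $\alpha$ smooth, item (i) of the previous proposition gives, for each $x\in\pi^{-1}(\alpha(0))$, a \emph{unique} $\theta$-horizontal lift $\alpha_x^h$ with $\alpha_x^h(0)=x$ and $\pi\circ\alpha_x^h=\alpha$; hence $\tau_\alpha(x)=\alpha_x^h(1)\in\pi^{-1}(\alpha(1))$ is well defined. For a piecewise smooth $\alpha$, pick a partition $0=t_0<t_1<\dots<t_N=1$ with each $\alpha|_{[t_{i-1},t_i]}$ smooth, lift over each subinterval starting from the endpoint of the previous lift, and concatenate; uniqueness on each piece makes this independent of the partition (pass to a common refinement), and it yields $\tau_\alpha=\tau_{\alpha|_{[t_{N-1},t_N]}}\circ\cdots\circ\tau_{\alpha|_{[t_0,t_1]}}$. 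So it suffices to prove the assertions for smooth $\alpha$, the general case following by composition.

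\textbf{$G$-equivariance.} Fix $g\in G$ and $x\in\pi^{-1}(\alpha(0))$. The curve $R_g\circ\alpha_x^h$ starts at $xg$, projects to $\pi\circ R_g\circ\alpha_x^h=\pi\circ\alpha_x^h=\alpha$, and is $\theta$-horizontal because $\tfrac{d}{dt}(R_g\circ\alpha_x^h)(t)=R_{g\ast}\dot\alpha_x^h(t)\in R_{g\ast}H(P)_{\alpha_x^h(t)}=H(P)_{\alpha_x^h(t)g}$, using the identity $H(P)_{yg}=R_{g\ast}H(P)_y$ recorded earlier. By the uniqueness in item (i), $\alpha_{xg}^h=R_g\circ\alpha_x^h$; evaluating at $t=1$ gives $\tau_\alpha(xg)=\tau_\alpha(x)g$.

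\textbf{Bijectivity.} Let $\bar\alpha(t)\ce\alpha(1-t)$, again piecewise smooth. For $x\in\pi^{-1}(\alpha(0))$, the curve $t\mapsto\alpha_x^h(1-t)$ is $\theta$-horizontal, projects to $\bar\alpha$, and starts at $\alpha_x^h(1)=\tau_\alpha(x)$; by uniqueness it equals $\bar\alpha^{\,h}_{\tau_\alpha(x)}$, so evaluating at $t=1$ gives $\tau_{\bar\alpha}(\tau_\alpha(x))=\alpha_x^h(0)=x$. The symmetric argument gives $\tau_\alpha\circ\tau_{\bar\alpha}=\id$ on $\pi^{-1}(\alpha(1))$, so $\tau_\alpha$ is a bijection with inverse $\tau_{\bar\alpha}$.

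\textbf{Smoothness, and the main obstacle.} The one point needing genuine care is that $\tau_\alpha$ is smooth (then so is $\tau_{\bar\alpha}=\tau_\alpha^{-1}$, whence $\tau_\alpha$ is a diffeomorphism). By the reduction above, it is enough to treat a smooth $\alpha$ with image in a trivializing open set $U$: writing $\phi_U\colon\pi^{-1}(U)\to U\times G$ and pulling back $\theta$, the condition $\theta(\dot\gamma)=0$ along a lift of $\alpha$ becomes a first-order ODE $\dot h(t)=f\big(t,h(t)\big)$ for the $G$-valued component $h$ of the lift, with $f$ smooth in all arguments and $h(0)$ determined by $x$. The classical theorem on smooth dependence of solutions of ODEs on initial conditions then shows that $x\mapsto\alpha_x^h(1)$ is smooth, so $\tau_\alpha$ and $\tau_{\bar\alpha}$ are smooth mutually inverse maps. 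The whole proof is thus formal once one has item (i) and the $R_{g\ast}$-invariance of $H(P)$, the only technical step being this local ODE description; combining the four parts, $\tau_\alpha$ is a $G$-equivariant diffeomorphism.
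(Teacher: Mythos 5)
Your proof is correct. The paper itself gives no proof of this proposition --- it is stated as a classical consequence of item (i) of the preceding proposition (the standard reference being Kobayashi--Nomizu) --- and your write-up supplies exactly the expected argument: equivariance from the $R_{g\ast}$-invariance of $H(P)$ plus uniqueness of lifts, invertibility via the reversed curve $\bar\alpha$, and smoothness via local trivializations and smooth dependence of ODE solutions on initial data, with the concatenation/refinement step correctly handling both the piecewise-smooth reduction and the passage to trivializing neighborhoods.
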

\begin{definition}
$\tau_{\alpha}$ is called the parallel transport along $\alpha$ with respect to the connection $\theta$.
\end{definition}
Now we define an equivalence relation $\sim$ on $P$ by saying that $p \sim q$ if, and only if, they can be joined by a piecewise smooth $\theta$-horizontal path in $P$. If $\alpha$ is a loop based at $m \in M$, then, $\forall x \in \pi^{-1}(m)$, we have $x \sim \tau_{\alpha}(x) = xg$, for some $g \in G$. 
\begin{definition}
The holonomy group of $\theta$ based at $u \in P$ is defined by
\begin{equation}
{\rm{Hol}}_{u}(\theta) = \big \{ g \in G \ \  | \ \ u \sim ug\big \}.
\end{equation}
The restricted holonomy group of $\theta$ based at $u$ is defined by the subgroup ${\rm{Hol}}_{u}^{0}(\theta)$ of parallel transports along contractible loops based at $\pi(u) \in M$.
\end{definition}

The holonomy group ${\rm{Hol}}_{u}(\theta) \subset G$, $u \in P$, is a Lie subgroup\footnote{ Recall that $H \subset G$ is a Lie subgroup if $H$ is an immersed submanifold of $G$, such that the product $H \times H \to H$ is differentiable with respect to the intrinsic structure of $H$.} of $G$ whose the connected component of the identity is given by the connected Lie subgroup ${\rm{Hol}}_{u}^{0}(\theta) \subset G$, which is called reduced holonomy group of $\theta$. Moreover, we have the following properties:
\begin{enumerate}
\item[(1)] ${\rm{Hol}}_{ug}(\theta) = g^{-1}{\rm{Hol}}_{u}(\theta)g$, $\forall g \in G$,
\item[(2)] If $M$ is simply connected, then ${\rm{Hol}}_{u}(\theta) = {\rm{Hol}}_{u}^{0}(\theta)$, $\forall u \in P$.
\end{enumerate}
\begin{remark}
Given $u,v \in P$, since ${\rm{Hol}}_{v}(\theta) = g^{-1}{\rm{Hol}}_{u}(\theta)g$, for some $g \in G$, we can also denote the holonomy group of $\theta$ by ${\rm{Hol}}(\theta)$ without referring to the base point.
\end{remark}

\begin{definition}
A reduction of a principal $G$-bundle $\pi \colon P \to M$ to a Lie subgroup $H$ is defined by a principal $H$-bundle $\pi_{Q} \colon Q \to M$ together with a $H$-equivariant smooth map $\Psi \colon Q \to P$, which covers the identity map ${\rm{id}}_{M} \colon M \to M$.
\end{definition}

Given a principal $G$-bundle $\pi \colon P \to M$, fixed a connection $\theta \in \Omega^{1}(P,\mathfrak{g})$, for every $u \in P$, let us consider the following subset 
\begin{equation}
\label{Holonomybundle}
P_{u}(\theta): = \big \{ x \in P \ \ | \ \ u \sim x\big \},
\end{equation}
i.e., the set of points which can be joined to $u$ by a $\theta$-horizontal path. 
\begin{theorem}
\label{holonomybundle}
In the last setting, the following hold:
\begin{enumerate}
\item[(a)] $P_{u}(\theta)$ is a principal ${\rm{Hol}}_{u}(\theta)$-bundle over $M$;
\item[(b)] The restriction of $\theta$ to $P_{u}(\theta)$ defines a principal connection.
\end{enumerate}
\end{theorem}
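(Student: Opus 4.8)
The plan is to establish (a) by checking, in order, that ${\rm{Hol}}_{u}(\theta)$ acts freely on $P_{u}(\theta)$, that the orbits of this action are exactly the fibers of $\pi|_{P_{u}(\theta)}$, that $\pi|_{P_{u}(\theta)}$ is surjective onto $M$, and finally that $P_{u}(\theta)$ carries a smooth manifold structure making $\pi|_{P_{u}(\theta)}\colon P_{u}(\theta)\to M$ a locally trivial principal ${\rm{Hol}}_{u}(\theta)$-bundle; item (b) will then follow by restricting the defining identities of $\theta$. The first three points are soft. Using $H(P)_{xg}=R_{g\ast}H(P)_{x}$, the translate $R_{g}\circ\alpha$ of a $\theta$-horizontal path $\alpha$ from $p$ to $q$ is $\theta$-horizontal from $pg$ to $qg$; since $\sim$ is an equivalence relation, for $x\in P_{u}(\theta)$ and $g\in{\rm{Hol}}_{u}(\theta)$ we get $u\sim x$, $u\sim ug$, hence $ug\sim xg$ and so $u\sim xg$, i.e. $xg\in P_{u}(\theta)$, while freeness is inherited from the free $G$-action. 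If $x,y\in P_{u}(\theta)$ with $\pi(x)=\pi(y)$, write $y=xg$ with $g\in G$, and pick horizontal paths $\alpha$ from $u$ to $x$ and $\beta$ from $u$ to $y$; then $\beta$ followed by $R_{g}\circ\bar{\alpha}$ (where $\bar{\alpha}$ reverses $\alpha$, so $R_{g}\circ\bar{\alpha}$ runs from $xg$ to $ug$) is a horizontal path from $u$ to $ug$, so $g\in{\rm{Hol}}_{u}(\theta)$; thus the fibers of $\pi|_{P_{u}(\theta)}$ are precisely the ${\rm{Hol}}_{u}(\theta)$-orbits. Surjectivity is immediate from connectedness of $M$: join $\pi(u)$ to an arbitrary $m\in M$ by a path and lift it horizontally starting at $u$.

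The technical core — and the step I expect to be the main obstacle — is the smooth structure and local triviality, precisely because ${\rm{Hol}}_{u}(\theta)$ need not be closed in $G$, so $P_{u}(\theta)$ is in general only immersed, not embedded, in $P$. I would identify $P_{u}(\theta)$ with the orbit $\mathcal{O}(u)$ of $u$ under the pseudogroup generated by the flows of $\theta$-horizontal vector fields (equivalently, of horizontal lifts $X^{H}$ of vector fields $X$ on $M$, which span $H(P)$ pointwise): every point of $\mathcal{O}(u)$ is joined to $u$ by a piecewise-smooth horizontal path, and conversely any piecewise-smooth horizontal path starting at $u$ has velocity in $T\mathcal{O}(u)$ — because $H(P)_{y}\subseteq{\rm{Lie}}(\mathcal{F})_{y}\subseteq T_{y}\mathcal{O}(u)$ — and therefore stays in $\mathcal{O}(u)$. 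By Sussmann's theorem, $\mathcal{O}(u)=P_{u}(\theta)$ is then an immersed submanifold of $P$ with $H(P)_{x}\subseteq T_{x}P_{u}(\theta)$ for all $x$. Over a coordinate chart $U\subseteq M$ one builds a local section $\sigma\colon U\to P_{u}(\theta)$ by iteratively flowing from a chosen point along the horizontal lifts of the coordinate fields, and checks that $(m,h)\mapsto\sigma(m)h$ is a diffeomorphism $U\times{\rm{Hol}}_{u}(\theta)\to\pi^{-1}(U)\cap P_{u}(\theta)$, bijectivity coming from the orbit description of the fibers. The remaining work is to verify that these local trivializations are mutually compatible and $G$-equivariant, which is where the group structure of ${\rm{Hol}}_{u}(\theta)$ and the ${\rm{Ad}}$-equivariance of $\theta$ enter; this yields the principal ${\rm{Hol}}_{u}(\theta)$-bundle structure.

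For (b), once the bundle structure is in place one has $T_{x}P_{u}(\theta)=H(P)_{x}\oplus j_{x\ast}(\mathfrak{hol}_{u})$ at each $x\in P_{u}(\theta)$, where $\mathfrak{hol}_{u}:={\rm{Lie}}({\rm{Hol}}_{u}(\theta))$ and the second summand is tangent to the ${\rm{Hol}}_{u}(\theta)$-orbit through $x$; hence $\theta$ restricted to $P_{u}(\theta)$ is $\mathfrak{hol}_{u}$-valued, vanishing on $H(P)_{x}$ and sending $j_{x\ast}(\xi)$ to $\xi$ for $\xi\in\mathfrak{hol}_{u}$. The two connection axioms for $\theta|_{P_{u}(\theta)}$ are inherited verbatim from those for $\theta$ on $P$: $R_{h}^{\ast}(\theta|_{P_{u}(\theta)})={\rm{Ad}}(h^{-1})(\theta|_{P_{u}(\theta)})$ for $h\in{\rm{Hol}}_{u}(\theta)$, with ${\rm{Ad}}(h^{-1})$ preserving $\mathfrak{hol}_{u}$ since ${\rm{Hol}}_{u}(\theta)$ is a subgroup, and $\theta|_{P_{u}(\theta)}(\xi_{\ast})=\xi$ for $\xi\in\mathfrak{hol}_{u}$. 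Hence $\theta|_{P_{u}(\theta)}\in\Omega^{1}(P_{u}(\theta),\mathfrak{hol}_{u})$ is a principal connection, which completes the proof.
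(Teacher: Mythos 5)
The paper offers no proof of this statement to compare against: it is quoted as the classical reduction theorem with a pointer to Kobayashi--Nomizu. Your proposal is, in substance, the standard proof of that theorem, but organized so that the smooth structure on $P_{u}(\theta)$ is extracted from Sussmann's orbit theorem applied to the family of $\theta$-horizontal vector fields; this is precisely the control-theoretic route that the paper's framework (following Montgomery's Appendix D) is built on, so it is the natural choice here. The soft steps are all correct: the ${\rm{Hol}}_{u}(\theta)$-invariance of $P_{u}(\theta)$ via $H(P)_{xg}=R_{g\ast}H(P)_{x}$, the identification of the fibers of $\pi|_{P_{u}(\theta)}$ with the ${\rm{Hol}}_{u}(\theta)$-orbits through the concatenation of $\beta$ with $R_{g}\circ\bar{\alpha}$, surjectivity onto $M$ by horizontal lifting, and in (b) the decomposition $T_{x}P_{u}(\theta)=H(P)_{x}\oplus j_{x\ast}(\mathfrak{hol}_{u})$ forcing $\theta|_{P_{u}(\theta)}$ to be $\mathfrak{hol}_{u}$-valued with the two connection axioms inherited. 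Two places where you lean on standard but nontrivial facts that deserve an explicit citation or a few more lines: (i) the claim that a piecewise-smooth horizontal curve issuing from a point of $\mathcal{O}(u)$ stays in $\mathcal{O}(u)$ does not follow from the pointwise tangency $H(P)_{y}\subseteq T_{y}\mathcal{O}(u)$ alone --- a curve tangent to a general immersed submanifold can leave it --- but rather from the local integral-manifold (flow-box) structure of orbits, or from realizing the curve as an integral curve of a time-dependent horizontal field and using invariance of orbits under such flows; (ii) in the local triviality step, the smoothness of $(m,h)\mapsto\sigma(m)h$ and of its inverse with respect to the intrinsic manifold topology of the orbit and the intrinsic Lie group topology of ${\rm{Hol}}_{u}(\theta)$ is where the genuine work of the classical proof sits, and your sketch names the right ingredients without carrying them out. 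As an outline the argument is sound and complete in structure.
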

\begin{remark}
Notice that $P_{u}(\theta) \hookrightarrow P$ is a reduction of $\pi \colon P \to M$ to ${\rm{Hol}}_{u}(\theta) \subset G$. The principal bundle $P_{u}(\theta)$ is called holonomy bundle of $\theta$ through $u \in P$.
\end{remark}
\begin{theorem}[Ambrose-Singer]
\label{AS-theo}
Let $P$ be a principal $G$-bundle over $M$, and let $\theta$ be a principal connection on $P$. Then, for every $u \in P$, the Lie algebra $\mathfrak{hol}_{u}(\theta)$ of ${\rm{Hol}}_{u}(\theta)$ is given by
\begin{equation}
{\mathfrak{hol}}_{u}(\theta) = {\rm{Span}}\big \{  {\rm{d}}\theta_{x}\big (V,W\big ) \ \  | \ \ x \in P_{u}(\theta), \ \ V,W \in \ker(\theta)_{x}\big\}.
\end{equation}
\end{theorem}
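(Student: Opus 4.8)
The plan is to deduce the formula from Sussmann's orbit theorem applied to the horizontal distribution of the reduced connection on the holonomy bundle, together with the structure equation for $\theta$, following the strategy of \cite[Appendix D]{montgomery2002tour}. Throughout write $Q := P_{u}(\theta)$; by Theorem~\ref{holonomybundle}, $Q$ is a principal ${\rm Hol}_{u}(\theta)$-bundle over $M$ equipped with the principal connection $\theta|_{Q}$, and we let $\mathcal{H} := \ker(\theta)\cap TQ$ be the associated horizontal distribution and set
\[
\mathfrak{h}' := {\rm Span}\big\{\, {\rm d}\theta_{x}(V,W) \ \big|\ x\in Q,\ V,W\in\ker(\theta)_{x}\,\big\} \subseteq \mathfrak{g}.
\]
The task is to show $\mathfrak{h}' = \mathfrak{hol}_{u}(\theta)$.

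The inclusion $\mathfrak{h}' \subseteq \mathfrak{hol}_{u}(\theta)$ is the easy one: since $\theta|_{Q}$ is a principal connection for the structure group ${\rm Hol}_{u}(\theta)$ it is $\mathfrak{hol}_{u}(\theta)$-valued, hence so is its exterior derivative, and in particular ${\rm d}\theta_{x}(V,W)\in\mathfrak{hol}_{u}(\theta)$ for every $x\in Q$ and all $V,W\in\ker(\theta)_{x}$ (on horizontal vectors ${\rm d}\theta$ agrees with the curvature of $\theta|_{Q}$).

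For the reverse inclusion I would take $\mathcal{F}$ to be the family of horizontal lifts $\bar{X}^{H}$ of vector fields $\bar{X}$ on $M$. By the uniqueness of horizontal lifts of curves, the orbit $\mathcal{O}(u)$ of $G(\mathcal{F})$ through $u$ is exactly $Q$: any point of $Q$ is joined to $u$ by a $\theta$-horizontal path whose projection to $M$ is a finite concatenation of integral curves of vector fields, so the path itself is a concatenation of integral curves of elements of $\mathcal{F}$, and conversely every such concatenation is $\theta$-horizontal. Sussmann's orbit theorem then identifies $Q$ as an immersed submanifold of $P$ with $T_{x}Q = {\rm Span}\{(\Phi_{\ast}X)(x): \Phi\in G(\mathcal{F}),\ X\in\mathcal{F}\}$ for $x\in Q$. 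Since each $\bar{X}^{H}$ is $R_{g}$-invariant, every $\Phi\in G(\mathcal{F})$ is a $G$-equivariant automorphism of $Q$ covering a diffeomorphism of $M$, so $\Phi^{\ast}\theta$ is again a principal connection; using $\theta(X)=0$ for $X\in\mathcal{F}$ we get
\[
\theta_{x}\big((\Phi_{\ast}X)(x)\big) = \big(\Phi^{\ast}\theta - \theta\big)_{\Phi^{-1}(x)}\big(X(\Phi^{-1}(x))\big).
\]
Writing $\Phi$ as a composition of flows $\Phi_{t}^{\bar{Y}^{H}}$ and differentiating along each flow via $\mathcal{L}_{\bar{Y}^{H}}\theta = \iota_{\bar{Y}^{H}}{\rm d}\theta$ together with the identity ${\rm d}\theta(\xi_{\ast},\,\cdot\,) = -[\xi,\theta(\,\cdot\,)]$, an induction on the number of factors shows that the right-hand side above is an integral of curvature terms ${\rm d}\theta(\bar{Y}^{H},Z)$ with $Z$ horizontal, and hence lies in $\mathfrak{h}'$. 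Consequently $\theta_{x}(T_{x}Q)\subseteq\mathfrak{h}'$. On the other hand $T_{x}Q$ contains the full vertical tangent space of $Q$ at $x$, on which $\theta$ surjects onto $\mathfrak{hol}_{u}(\theta)$; therefore $\mathfrak{hol}_{u}(\theta) = \theta_{x}(T_{x}Q) \subseteq \mathfrak{h}'$, which combined with the first inclusion gives the desired equality.

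The step I expect to be the main obstacle is the induction just described: controlling the vertical part of every iterated push-forward $(\Phi_{\ast}X)(x)$ so that it remains inside the fixed subspace $\mathfrak{h}'$. The delicate point is that flows of horizontal vector fields need not preserve $\mathcal{H}$, so one must keep track of the horizontal component of $(\Phi_{s})_{\ast}X$ along each flow; the argument closes because $\mathfrak{h}'$ is a linear subspace which is moreover ${\rm Ad}({\rm Hol}_{u}(\theta))$-invariant, as one reads off from the equivariance $R_{g}^{\ast}{\rm d}\theta = {\rm Ad}(g^{-1}){\rm d}\theta$ of the curvature on $Q$. Once this computation is in place, the identification $\mathcal{O}(u)=Q$ coming from Theorem~\ref{holonomybundle} and Sussmann's orbit theorem assemble it into the stated formula. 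A variant that avoids the explicit computation is to verify directly from the structure equation that $\mathcal{H}\oplus\{\xi_{\ast}:\xi\in\mathfrak{h}'\}$ is an involutive distribution on $Q$, note that its integral leaf through $u$ contains every $\theta$-horizontal path and therefore coincides with $Q$, and compare dimensions.
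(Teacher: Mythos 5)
The paper does not actually prove Theorem~\ref{AS-theo}: it is stated as a classical result, cited to \cite{ambrose1953theorem}, with the introduction pointing to \cite[Appendix~D]{montgomery2002tour} for a proof via the orbit theorem. So there is no in-paper argument to compare against; your proposal is essentially a reconstruction of Montgomery's orbit-theorem proof (with the Kobayashi--Nomizu/Frobenius involutivity argument of \cite{kobayashi1963foundations} correctly identified as the alternative), and as a sketch it is sound. Two remarks. First, the step you flag as the main obstacle is easier than you fear: writing $(\Phi_t^{\ast}\theta-\theta)_z(v)=\int_0^t {\rm d}\theta_{\Phi_s(z)}\big(\bar Y^H(\Phi_s(z)),(\Phi_s)_{\ast}v\big)\,{\rm d}s$ and decomposing $(\Phi_s)_{\ast}v$ into horizontal and vertical parts, the vertical part contributes nothing because ${\rm d}\theta(\xi_{\ast},\bar Y^H)=-[\xi,\theta(\bar Y^H)]=0$, so the integrand lies pointwise in the closed subspace $\mathfrak{h}'$ and no ${\rm Ad}$-invariance is needed for this route (it \emph{is} needed for the involutivity variant, to get $[\mathfrak{h}',\mathfrak{h}']\subseteq\mathfrak{h}'$). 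Second, the places that genuinely deserve a word are the ones you pass over quickly: (i) identifying $\mathcal{O}(u)$ with $P_u(\theta)$ requires reparametrizing piecewise smooth horizontal paths as concatenations of integral curves of (globally defined) horizontal lifts, and (ii) concluding that the vertical space $j_{u\ast}(\mathfrak{hol}_u(\theta))$ of the reduced bundle sits inside $T_u\mathcal{O}(u)$ uses the compatibility of the orbit-theorem manifold structure with the principal-bundle structure from Theorem~\ref{holonomybundle} (initiality of the orbit). Both points are standard and are handled in \cite{montgomery2002tour} and \cite{sussmann1973orbits}, so they are gaps of exposition rather than of substance.
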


Based on the previous ideas related with geometric control theory and principal connections, we introduce the following concept.
\begin{definition}
Given a principal $G$-bundle $\pi \colon P \to M$, a principal connection $\theta \in \Omega^{1}(P;\mathfrak{g})$ is said to be a {\textit{controllable}} if $\mathcal{H} = \ker(\theta) \subset TP$ defines a bracket-generating distribution.
\end{definition}

The next result characterizes completely the controllable principal connections.

\begin{theorem}
\label{controllableconnection}
Let $P$ be a principal $G$-bundle over $M$, and let $\theta$ be a principal connection on $P$. Then, $\theta$ is controllable if, and only if, $G =  {\rm{Hol}}_{u}(\theta)$ and $P_{u}(\theta) = P$, for every $u \in P$.
\end{theorem}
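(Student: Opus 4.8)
The plan is to establish the two implications separately: the forward direction follows quickly from Chow's theorem, while for the converse the Ambrose-Singer theorem does the essential work.

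First I would prove $(\Rightarrow)$. Suppose $\theta$ is controllable, i.e. $\mathcal{H}=\ker(\theta)$ is bracket-generating. Then by Chow's theorem (equivalently Theorem \ref{RC-orbit}, applied to the family of all sections of $\mathcal{H}$) every point of $P$ can be joined to $u$ by an $\mathcal{H}$-horizontal curve, so $P_{u}(\theta)=P$ for every $u\in P$. Fixing $u$ and $g\in G$, the point $ug$ then lies in $P=P_{u}(\theta)$, hence $u\sim ug$, which by definition of the holonomy group means $g\in{\rm Hol}_{u}(\theta)$; thus $G\subseteq{\rm Hol}_{u}(\theta)\subseteq G$, i.e. ${\rm Hol}_{u}(\theta)=G$. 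As $u$ was arbitrary, both required identities hold.

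For $(\Leftarrow)$, assume ${\rm Hol}_{u}(\theta)=G$ and $P_{u}(\theta)=P$ for every $u$; the goal is ${\rm Lie}(\mathcal{H})_{x}=T_{x}P$ for every $x\in P$. Using the splitting $T_{x}P=\mathcal{H}_{x}\oplus V(P)_{x}$, with $\mathcal{H}_{x}\subseteq{\rm Lie}(\mathcal{H})_{x}$ and $V(P)_{x}=j_{x\ast}(\mathfrak{g})$, it suffices to show $\xi_{\ast}(x)\in{\rm Lie}(\mathcal{H})_{x}$ for every $\xi\in\mathfrak{g}$. The local input is the structure equation: for sections $\widetilde{V},\widetilde{W}$ of $\mathcal{H}$ one has $\theta([\widetilde{V},\widetilde{W}])=-\,{\rm d}\theta(\widetilde{V},\widetilde{W})$, since $\theta(\widetilde{V})=\theta(\widetilde{W})=0$. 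Any $v,w\in\ker(\theta)_{x}$ extend to such fields (by item (ii) of the proposition on horizontal lifts above), and since $[\widetilde{V},\widetilde{W}](x)\in{\rm Lie}(\mathcal{H})_{x}$, subtracting its $\mathcal{H}$-component — which lies in $\mathcal{H}_{x}\subseteq{\rm Lie}(\mathcal{H})_{x}$ — shows that the vertical vector $\big({\rm d}\theta_{x}(v,w)\big)_{\ast}(x)$ lies in ${\rm Lie}(\mathcal{H})_{x}$. So the curvature values at $x$ already contribute to the vertical part of ${\rm Lie}(\mathcal{H})_{x}$, and by the Ambrose-Singer theorem (Theorem \ref{AS-theo}) together with ${\rm Hol}_{u}(\theta)=G$ and $P_{u}(\theta)=P$, these curvature values, taken over all of $P$, span $\mathfrak{g}$.

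The step I expect to be the main obstacle is bridging the last gap: the Ambrose-Singer spanning set consists of curvature values at many points of the holonomy bundle, whereas one needs the full vertical fibre $V(P)_{x}$ at a single fixed $x$. I would handle it in two moves. First, iterate the bracket computation: brackets of a horizontal field $\widetilde{Z}$ with $[\widetilde{V},\widetilde{W}]$ — discarding horizontal parts and using the identity $[\widetilde{Z},\zeta_{\ast}]=(\widetilde{Z}\zeta)_{\ast}$ valid for a $\mathfrak{g}$-valued function $\zeta$ and a horizontal $\widetilde{Z}$ — produce the fundamental vector fields attached to the covariant derivatives of the curvature at $x$; hence ${\rm Lie}(\mathcal{H})_{x}$ contains $\{\eta_{\ast}(x):\eta\in\mathfrak{m}_{x}\}$, where $\mathfrak{m}_{x}\subseteq\mathfrak{g}$ is spanned by the curvature and all its covariant derivatives at $x$. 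Second, observe that ${\rm Lie}(\mathcal{H})$ is a $G$-invariant, Lie-bracket-closed family of vector fields tangent to $P_{u}(\theta)=P$, so its vertical part is transported along $\mathcal{H}$-horizontal curves by parallel transport — which, because $P_{u}(\theta)=P$, joins any two points of $P$ — and together with the $G$-equivariance of the curvature this carries the Ambrose-Singer spanning statement onto the fibre over $\pi(x)$, forcing $\mathfrak{m}_{x}=\mathfrak{g}$. Then $V(P)_{x}\subseteq{\rm Lie}(\mathcal{H})_{x}$ and hence ${\rm Lie}(\mathcal{H})_{x}=T_{x}P$, so $\theta$ is controllable. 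Equivalently, one may package these two moves as the single assertion that ${\rm Lie}(\mathcal{H})$ has constant rank — the delicate point — which makes it integrable by Frobenius; the leaf through $x$ then contains $P_{x}(\theta)=P$, hence equals $P$, so ${\rm rank}\,{\rm Lie}(\mathcal{H})=\dim P$.
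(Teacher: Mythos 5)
Your forward direction is correct and coincides with the paper's: Rashevsky--Chow gives $\mathcal{O}(u)=P$, hence $P_{u}(\theta)=P$, and then $ug\in P_{u}(\theta)$ for all $g$ forces ${\rm{Hol}}_{u}(\theta)=G$. The problem is in the converse, at exactly the step you flag as the main obstacle. Your first move is sound: the structure equation plus iterated brackets with invariant horizontal lifts shows that ${\rm{Lie}}(\mathcal{H})_{x}$ contains $\mathcal{H}_{x}$ together with the fundamental vectors at $x$ of the curvature and all of its covariant derivatives at $x$. But the second move --- using parallel transport and $G$-equivariance to ``carry the Ambrose--Singer spanning statement onto the fibre over $\pi(x)$, forcing $\mathfrak{m}_{x}=\mathfrak{g}$'' --- is a genuine gap, and it cannot be closed in the smooth category. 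The Lie bracket is a local operator, so ${\rm{Lie}}(\mathcal{H})_{x}$ depends only on the germ of $\mathcal{H}$ at $x$, whereas the hypothesis ${\rm{Hol}}_{u}(\theta)=G$, $P_{u}(\theta)=P$ is global: Ambrose--Singer spans $\mathfrak{g}$ by curvature values taken over \emph{all} of $P$. Parallel transport from $y$ to $x$ sends $\xi_{\ast}(y)$ to $\xi_{\ast}(x)$ for the same fixed $\xi\in\mathfrak{g}$, but it does not convert ``$\xi$ is a curvature value at $y$'' into ``$\xi_{\ast}(x)$ is an iterated bracket at $x$''. Concretely, take the Hopf bundle $S^{3}\to S^{2}$ with a ${\rm{U}}(1)$-connection whose curvature is supported over a small disc $D\subset S^{2}$ (every Chern class contains such a curvature form). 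Since $S^{2}$ is simply connected and the curvature is somewhere nonzero, ${\rm{Hol}}_{u}(\theta)={\rm{U}}(1)$ and $P_{u}(\theta)=S^{3}$; yet over $S^{2}\setminus D$ the distribution $\mathcal{H}$ is integrable, so ${\rm{Lie}}(\mathcal{H})_{x}=\mathcal{H}_{x}$ is two-dimensional there and $\mathfrak{m}_{x}=0$. The same example shows ${\rm{Lie}}(\mathcal{H})$ need not have constant rank, so the Frobenius packaging fails for the same reason.

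Your hesitation at this point is well placed, because the paper's own proof of this implication runs into the same locality obstruction: it uses the identity $\tau_{\ast}(Z(x))=Z(\tau(x))$ for the vertical field $Z=([X,Y])^{V}$, which is valid only for fundamental fields $\xi_{\ast}$ with $\xi$ a \emph{fixed} element of $\mathfrak{g}$; what the computation actually produces at $v$ is $\big(\theta_{x}([X,Y](x))\big)_{\ast}(v)$, not $([X,Y])^{V}(v)$, and these differ unless $\theta([X,Y])$ is constant along the transport (it vanishes identically near $v$ in the example above). The converse does hold in the real-analytic category (Nagano's theorem gives $T_{x}\mathcal{O}(u)={\rm{Lie}}(\mathcal{H})_{x}$), and it holds trivially if ``controllable'' is taken in the control-theoretic sense $\mathcal{O}(u)=P$ rather than ``bracket-generating''; under either reading your outline goes through. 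As literally stated for smooth connections, however, the implication $G={\rm{Hol}}_{u}(\theta)$ and $P_{u}(\theta)=P$ $\Rightarrow$ $\ker(\theta)$ bracket-generating is false, so no refinement of your two moves can rescue it.
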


\begin{proof}
Denoting $\mathcal{H} = \ker(\theta)$, if $\theta$ is controllable, by definition, we have ${\rm{Lie}}(\mathcal{H}) = TP$. Thus, given $u \in P$, from Theorem \ref{RC-orbit}, we have that $\mathcal{O}(q) = \{\Phi(u) \ | \ \Phi \in G(\mathcal{H})\} = P$. Since $\mathcal{O}(q) \subset P_{u}(\theta)$, we conclude that $P = P_{u}(\theta)$. Therefore, $u \sim ug$, for every $g \in G$, i.e., $G = {\rm{Hol}}_{u}(\theta)$. Conversely, suppose that $G = {\rm{Hol}}_{u}(\theta)$ and $P_{u}(\theta) = P$, for every $u \in P$. In this case, from Theorem \ref{holonomybundle} and Theorem \ref{AS-theo}, for every $u,v \in P$, we have 
\begin{equation}
T_{v}(P_{u}(\theta)) = \mathcal{H}_{v} \oplus j_{v \ast}({\mathfrak{hol}}_{u}(\theta)), 
\end{equation}
such that 
\begin{equation}
{\mathfrak{hol}}_{u}(\theta) = {\rm{Span}}\big \{  {\rm{d}}\theta_{x}\big (V,W\big ) \ \  | \ \ x \in P_{u}(\theta), \ \ V,W \in \ker(\theta)_{x}\big\}.
\end{equation}
Given $x \in P_{u}(\theta)$, since $v \sim x$, we have a $G$-equivariant diffeomorphism $\tau \colon \pi^{-1}(\pi(x)) \to \pi^{-1}(\pi(v))$, such that $v = \tau(x)$, where $\tau$ is the parallel transport along the projection of some $\mathcal{H}$-horizontal curve joining $x$ and $v$. From this, since
\begin{equation}
j_{v}(h) = vh = \tau(x)h = \tau(xh) = \tau(j_{x}(h))
\end{equation}
for every $h \in {\rm{Hol}}_{u}(\theta)$, for all $\mathcal{H}$-horizontal vector fields $X$ and $Y$, we obtain
\begin{equation}
 j_{v \ast} \Big ( \theta_{x}\big ([X,Y](x)\big )\Big ) = \tau_{\ast} \Big ( j_{x \ast} \theta_{x}\big ([X,Y](x) \big )\Big) = \tau_{\ast} \Big ( ([X,Y])^{V}(x)\Big ),
\end{equation}
where $([X,Y])^{V}$ is the vertical component of $[X,Y]$. Since $\tau_{\ast}(Z(x)) = Z(\tau(x))$, for every vertical vector field $Z$, and ${\rm{d}} \theta_{x}(X(x),Y(x))= -\theta_{x} ([X,Y](x))$, we conclude that 
\begin{equation}
 j_{v \ast} \Big ( {\rm{d}} \theta_{x}(X(x),Y(x))\Big ) = -([X,Y])^{V}(v) = -\Big ([X,Y](v) - ([X,Y])^{H}(v)\Big),
\end{equation}
in other words,  $j_{v \ast}({\mathfrak{hol}}_{u}(\theta)) \subset {\rm{Lie}}(\mathcal{H})_{v}$, where
\begin{equation}
 {\rm{Lie}}(\mathcal{H})_{v} = {\rm{Span}} \Big \{ [X_{1}, \cdots, [X_{j-1},X_{j}]](v) \ \ \Big | \ \ X_{j} \in \mathcal{H}, \ \ j \in \mathbbm{N}\Big \}.
\end{equation}
Thus, we conclude that $TP = T(P_{u}(\theta))= {\rm{Lie}}(\mathcal{H})$, i.e., $\theta$ is controllable.
\end{proof}

\begin{remark}
Let $P$ be a principal $G$-bundle over $M$, and let $\theta$ be an arbitrary principal connection on $P$. From the previous results, we have that
\begin{equation}
P = \bigcup_{u \in P}P_{u}(\theta),
\end{equation}
such that the restriction of $\theta$ to each holonomy bundle $P_{u}(\theta)$, $u \in P$, is controllable.
\end{remark}

\begin{remark}
Using Ambrose-Singer theorem, one can show that, for every principal $G$-bundle $P \to M$, with $M$ connected and $\dim(M) \geq 2$, there exists a controllable connection. For the rather technical proof of this fact we refer to \cite{nomizu1956theoreme}.
\end{remark}
\section{Gromov-Hausdorff distance}

In this section, recall the definition of Gromov-Hausdorff distance, for more details on this subject, we suggest \cite[Chapter 7]{brin2001course}. Given two metric spaces $(X,d_{X})$ and $(Y,d_{Y})$, a {\textit{correspondence}} between the underlying sets $X$ and $Y$ is a subset $\mathscr{R} \subseteq  X \times Y$ satisfying the following property: for every $x \in X$ there exists at least one $y \in Y$, such that $(x,y) \in \mathscr{R}$, and similarly for every $y \in Y$ there exists an $x \in X$, such that $(x,y) \in \mathscr{R}$. Let us denote by $\mathcal{R}(X,Y)$ the set of all correspondences between $X$ and $Y$. Now we consider the following definition
\begin{definition}
Let $\mathscr{R} \in \mathcal{R}(X,Y)$ be a correspondence between two metric spaces $(X,d_{X})$ and $(Y,d_{Y})$. The \textit{distortion} of $\mathscr{R}$ is defined by 
\begin{equation}
{\rm{dis}}(\mathscr{R}) = \sup \Big  \{ |d_{X}(x,x') - d_{Y}(y,y')| \ \ \big | \ \ (x,y), (x',y') \in \mathscr{R} \ \Big\}.
\end{equation}
\end{definition}

From above, we can define the Gromov-Hausdorff distance between two metric spaces as follows.

\begin{definition}
We define the Gromov-Hausdorff distance of any two metric spaces $(X,d_{X})$ and $(Y,d_{Y})$ as being 
\begin{equation}
\label{GHdistance}
d_{GH}\big ((X,d_{X}),(Y,d_{Y}) \big ) = \frac{1}{2}\inf \Big  \{ {\rm{dis}}(\mathscr{R}) \ \ \Big | \ \ \mathscr{R}  \in \mathcal{R}(X,Y) \Big\}.
\end{equation}
\end{definition}

\begin{example}
If $F \colon (X,d_{X}) \to (Y,d_{Y})$ is a surjective map, then the graph
\begin{equation}
\mathscr{R}_{F} = \{(x,F(x)) \in X \times Y \ | \ x \in X\},
\end{equation}
defines a correspondence between two metric spaces $(X,d_{X})$ and $(Y,d_{Y})$.
\end{example}
The next result allows us to consider compact metric spaces as points of metric space, more precisely, we have the following theorem (e.g. \cite[Theorem 7.3.30]{brin2001course}). 
\begin{theorem}[Gromov–Hausdorff space] 
Let $\mathcal{M}$ be the set of all equivalence classes of isometric compact metric spaces. Then, $(\mathcal{M},d_{GH})$ is a metric space.
\end{theorem}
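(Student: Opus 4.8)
The plan is to verify the four axioms of a metric for $d_{GH}$: finiteness, symmetry, the identity of indiscernibles (vanishing of $d_{GH}$ characterizes isometry), and the triangle inequality, with essentially all of the content concentrated in the last two. The unifying technical device is the manipulation of correspondences, in particular their composition.

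First I would dispose of the easy points. For compact $(X,d_X)$ and $(Y,d_Y)$ both diameters are finite, and the full product $\mathscr{R} = X\times Y$ is a correspondence with ${\rm dis}(\mathscr{R}) \le {\rm diam}(X) + {\rm diam}(Y) < \infty$, so $d_{GH}$ is real-valued. Symmetry is immediate, since $(x,y)\mapsto(y,x)$ is a distortion-preserving bijection $\mathcal{R}(X,Y)\to\mathcal{R}(Y,X)$. If $f\colon X\to Y$ is a bijective isometry then its graph $\mathscr{R}_f = \{(x,f(x))\}$ is a correspondence of distortion $0$, so $d_{GH}=0$; taking $f = {\rm id}_X$ gives $d_{GH}((X,d_X),(X,d_X))=0$, and combined with the triangle inequality this shows $d_{GH}$ descends to the set $\mathcal{M}$ of isometry classes. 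For the triangle inequality, given $\mathscr{R}_1\in\mathcal{R}(X,Y)$ and $\mathscr{R}_2\in\mathcal{R}(Y,Z)$, I would form the composition $\mathscr{R}_2\circ\mathscr{R}_1 = \{(x,z)\ :\ \exists\, y\in Y,\ (x,y)\in\mathscr{R}_1,\ (y,z)\in\mathscr{R}_2\}$, check it is a correspondence between $X$ and $Z$, and estimate: for $(x,z),(x',z')\in\mathscr{R}_2\circ\mathscr{R}_1$ with witnesses $y,y'$, the triangle inequality in $\mathbbm{R}$ gives $|d_X(x,x') - d_Z(z,z')| \le |d_X(x,x') - d_Y(y,y')| + |d_Y(y,y') - d_Z(z,z')| \le {\rm dis}(\mathscr{R}_1) + {\rm dis}(\mathscr{R}_2)$. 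Hence ${\rm dis}(\mathscr{R}_2\circ\mathscr{R}_1)\le{\rm dis}(\mathscr{R}_1)+{\rm dis}(\mathscr{R}_2)$, and passing to infima yields $d_{GH}(X,Z)\le d_{GH}(X,Y)+d_{GH}(Y,Z)$.

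The substantive step is: $d_{GH}((X,d_X),(Y,d_Y)) = 0$ forces $X$ and $Y$ to be isometric — and this is where compactness is indispensable. Pick correspondences $\mathscr{R}_n$ with $\varepsilon_n := {\rm dis}(\mathscr{R}_n)\to 0$, and for each $n$ choose a map $f_n\colon X\to Y$ with $(x,f_n(x))\in\mathscr{R}_n$ for all $x$; then $|d_Y(f_n(x),f_n(x')) - d_X(x,x')| \le \varepsilon_n$ for all $x,x'$. Fix a countable dense subset $\{x_k\}_{k\ge1}\subset X$ (compact implies separable). Since $Y$ is compact, hence sequentially compact, a diagonal extraction produces a subsequence along which $f_n(x_k)$ converges in $Y$ for every $k$; call the limit $f(x_k)$. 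Letting $n\to\infty$ in the displayed inequality gives $d_Y(f(x_k),f(x_l)) = d_X(x_k,x_l)$ for all $k,l$, so $f$ is distance-preserving on a dense set and thus extends uniquely — by uniform continuity, and with the extension still distance-preserving by continuity of the metric — to an isometric embedding $\bar f\colon X\to Y$. Symmetrically one obtains an isometric embedding $\bar g\colon Y\to X$.

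To conclude that $\bar f$ is onto I would invoke the elementary self-embedding lemma: a distance-preserving self-map $h$ of a compact metric space $Z$ is surjective, since otherwise, picking $z_0\notin h(Z)$ (a closed set, being a compact image) with $\delta := d(z_0,h(Z)) > 0$, the orbit $z_0,h(z_0),h^2(z_0),\dots$ would be $\delta$-separated, contradicting sequential compactness. Applying this to $\bar g\circ\bar f\colon X\to X$ shows $\bar g$ is onto, and applying it to $\bar f\circ\bar g\colon Y\to Y$ shows $\bar f$ is onto; being also injective (isometric embeddings are), $\bar f$ is a bijective isometry, so $X$ and $Y$ represent the same class in $\mathcal{M}$. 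The only genuinely non-routine part is this last paragraph together with the Arzelà–Ascoli-style extraction producing $\bar f$; everything else is bookkeeping with correspondences.
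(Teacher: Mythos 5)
Your proof is correct and complete; the paper itself gives no proof of this statement, citing it as \cite[Theorem 7.3.30]{brin2001course}, and your argument (composition of correspondences for the triangle inequality, a diagonal extraction on a countable dense set plus the surjectivity of distance-preserving self-maps of compact spaces for the case $d_{GH}=0$) is essentially the standard proof found in that reference. No gaps.
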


\section{Proof of Theorem A}

Let $\pi \colon P \to (M,g_{M})$ be a principal $G$-principal and, such that $(M,g_{M})$ is compact and connected Riemannian manifold and $G$ is compact and connected Lie group. Fixed a principal connection $\theta \colon TP \to \mathfrak{g}$ and fixed a bi-invariant metric $\langle -,- \rangle$ on $G$, we define the following Riemannian metrics on $P$
\begin{equation}
g_{t} := f(t)\pi^{\ast}g_{M} + \langle \theta,\theta \rangle  \ \ \text{and} \ \ g_{P} := \pi^{\ast}g_{M} + \langle \theta,\theta \rangle,
\end{equation}
where $f \colon [0,T) \to \mathbbm{R}$ is a continuous function, such that $T > 0$ and $f > 0$. Given $u \in P$, we consider the Riemannian manifold $( G/{\rm{Hol}}_{u}(\theta),g_{T})$, such that $g_{T}$ is the $G$-invariant Riemannian metric induced by $\langle -, - \rangle$ which makes the canonical projection $\pi \colon G \to G/{\rm{Hol}}_{u}(\theta)$ a Riemannian submersion, here we suppose that ${\rm{Hol}}_{u}(\theta) \subset G$ is a closed subgroup. In this setting, we have the following result (Theorem \ref{thmA}).
\begin{theorem}
\label{holonomy-GH}
If $\theta \in \Omega^{1}(P,\mathfrak{g})$ is a principal connection, such that ${\rm{Hol}}_{u}(\theta) \subset G$ is a closed subgroup, then 
\begin{equation}
d_{GH}\big ((P,d_{g_{t}}),(G/{\rm{Hol}}_{u}(\theta),d_{g_{T}})\big) \leq \frac{{\rm{diam}}\big (P_{u}(\theta),{\rm{dist}}_{\mathcal{H}} \big )f(t)}{2},
\end{equation}
$\forall t \in [0,T)$, where $P_{u}(\theta)$ is the holonomy bundle through $u \in P$ and ${\rm{dist}}_{\mathcal{H}}$ is the Carnot-Carath\'{e}odory metric induced by the bundle type sub-Riemannnian structure $(\ker(\theta),g_{P})$ restricted to $P_{u}(\theta)$.
\end{theorem}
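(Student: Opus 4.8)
The plan is to produce a single correspondence between $(P,d_{g_t})$ and $(G/\mathrm{Hol}_u(\theta),d_{g_T})$ whose distortion is bounded by $\mathrm{diam}(P_u(\theta),\mathrm{dist}_{\mathcal H})$ up to the relevant rescaling, and then to conclude with $d_{GH}\le\frac12\,\mathrm{dis}(\mathscr R)$. Writing $H:=\mathrm{Hol}_u(\theta)$, which is closed by hypothesis, I would first build a surjective map $F\colon P\to G/H$: by Theorem~\ref{holonomybundle}, $P_u(\theta)\hookrightarrow P$ is a reduction of structure group to $H$, so $P\cong P_u(\theta)\times_H G$, and I set $F([\,y,g\,]):=g^{-1}H$. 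The point is that this is invariant under $(y,g)\mapsto(yh,h^{-1}g)$, hence well defined and smooth; it is surjective; and its fibers $F^{-1}(g^{-1}H)=P_u(\theta)\cdot g=P_{ug}(\theta)$ are precisely the holonomy bundles, each diffeomorphic to $P_u(\theta)$, hence connected, and compact since $M$ is compact and $H$ is closed in the compact group $G$.

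Next I would check that $F$ is a Riemannian submersion from $(P,g_t)$ to $(G/H,g_T)$. I would first observe that $G$ acts by isometries on $(P,g_t)$, since $R_g^{\ast}\pi^{\ast}g_M=\pi^{\ast}g_M$ and $R_g^{\ast}\langle\theta,\theta\rangle=\langle\mathrm{Ad}(g^{-1})\theta,\mathrm{Ad}(g^{-1})\theta\rangle=\langle\theta,\theta\rangle$ by $\mathrm{Ad}$-invariance of the bi-invariant metric, and that $F$ intertwines the right $G$-action on $P$ with an isometric transitive $G$-action on $(G/H,g_T)$; by equivariance it then suffices to verify the submersion property at the one point $u$. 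There, $T_u(P_u(\theta))=\ker\theta_u\oplus j_{u\ast}(\mathfrak{hol}_u(\theta))$ (as in the proof of Theorem~\ref{controllableconnection}), and since $\ker\theta$ is $g_t$-orthogonal to the vertical subbundle while $g_t$ agrees with $\langle-,-\rangle$ on the vertical, the $g_t$-orthogonal complement of that tangent space is $j_{u\ast}(\mathfrak{hol}_u(\theta)^{\perp})$; one checks $dF_u$ carries it isometrically onto $\mathfrak g/\mathfrak{hol}_u(\theta)\cong T_{eH}(G/H)$ by the defining property of $g_T$. So $F$ is a proper surjective Riemannian submersion with connected fibers (a fiber bundle, by Ehresmann).

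The estimate is then carried out on $\mathscr R_F=\{(x,F(x)):x\in P\}$. One direction is free: a Riemannian submersion is $1$-Lipschitz, so $d_{g_T}(F(x),F(x'))\le d_{g_t}(x,x')$. For the reverse inequality I would, given $\varepsilon>0$, take a near-minimizing curve $\beta$ from $F(x)$ to $F(x')$ in $(G/H,g_T)$ and its horizontal lift $\widetilde\beta$ in $P$ starting at $x$, which has the same $g_t$-length and ends at some $x''$ in the fiber $P_{x'}(\theta)$. Here is where Theorem~\ref{controllableconnection} enters: $\mathcal H|_{P_{x'}(\theta)}$ is bracket-generating, so by Rashevsky--Chow there is an $\mathcal H$-horizontal curve $\sigma$ inside $P_{x'}(\theta)$ from $x''$ to $x'$ with $L_{g_P}(\sigma)\le\mathrm{dist}_{\mathcal H}(x'',x')+\varepsilon\le\mathrm{diam}(P_u(\theta),\mathrm{dist}_{\mathcal H})+\varepsilon$, the last step because the right $G$-action is a sub-Riemannian isometry between $P_u(\theta)$ and $P_{x'}(\theta)$ (it preserves $\ker\theta$ and $g_P$). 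Since $\theta(\dot\sigma)\equiv 0$, one has $\|\dot\sigma\|_{g_t}=\sqrt{f(t)}\,\|\dot\sigma\|_{g_P}$, so concatenating $\widetilde\beta$ with $\sigma$ gives $d_{g_t}(x,x')\le d_{g_T}(F(x),F(x'))+\varepsilon+\sqrt{f(t)}\big(\mathrm{diam}(P_u(\theta),\mathrm{dist}_{\mathcal H})+\varepsilon\big)$, and letting $\varepsilon\downarrow 0$ yields $d_{g_t}(x,x')-d_{g_T}(F(x),F(x'))\le\sqrt{f(t)}\,\mathrm{diam}(P_u(\theta),\mathrm{dist}_{\mathcal H})$.

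Combining the two inequalities bounds the distortion: $\mathrm{dis}(\mathscr R_F)\le\sqrt{f(t)}\,\mathrm{diam}(P_u(\theta),\mathrm{dist}_{\mathcal H})$, hence $d_{GH}\big((P,d_{g_t}),(G/H,d_{g_T})\big)\le\frac12\,\mathrm{diam}(P_u(\theta),\mathrm{dist}_{\mathcal H})\sqrt{f(t)}$, which is the claimed estimate (with the coefficient reading $f(t)$ under the normalization $g_t=f(t)^2\pi^{\ast}g_M+\langle\theta,\theta\rangle$). I expect the main obstacle to be the two structural verifications rather than any estimate: first, that $F$ is well defined and a Riemannian submersion — which hinges on the reduction-of-structure-group picture and on $\ker\theta$ being $g_t$-orthogonal to the vertical — and second, the fact that $x''$ and $x'$ can be joined \emph{within the single fiber} $P_{x'}(\theta)$ by a horizontal curve of $g_P$-length at most $\mathrm{diam}(P_u(\theta),\mathrm{dist}_{\mathcal H})$, which is precisely the content of Theorem~\ref{controllableconnection} (Ambrose--Singer together with Rashevsky--Chow) and is the conceptual heart of the proof.
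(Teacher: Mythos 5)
Your proof is correct and follows essentially the same route as the paper's: the equivariant proper Riemannian submersion $F\colon (P,g_t)\to (G/\mathrm{Hol}_u(\theta),g_T)$ whose fibers are the holonomy bundles, horizontal lifts of (near-)minimizing geodesics, closing up inside the fiber $P_{x'}(\theta)$ via Rashevsky--Chow and the fiber-independence of the Carnot--Carath\'eodory diameter, and finally the distortion bound for the graph correspondence $\mathscr R_F$. Your scaling remark is also right: with $g_t=f(t)\pi^{\ast}g_M+\langle\theta,\theta\rangle$ one has $\|\dot\sigma\|_{g_t}=\sqrt{f(t)}\,\|\dot\sigma\|_{g_P}$ on $\ker\theta$, so the argument actually yields the coefficient $\sqrt{f(t)}$ rather than $f(t)$ --- the paper's proof asserts $\int\|\dot\gamma\|_{g_t}\,ds=f(t)\int\|\dot\gamma\|_{g_P}\,ds$, a slip that is harmless for the corollaries (since $\sqrt{f}\to 0$ iff $f\to 0$) and which your alternative normalization $g_t=f(t)^2\pi^{\ast}g_M+\langle\theta,\theta\rangle$ correctly repairs.
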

\begin{proof}
Since ${\rm{Hol}}_{u}(\theta) \subset G$ defines a reduction of $P$, we have an equivariant surjective submersion
\begin{equation}
F \colon P \to G/{\rm{Hol}}_{u}(\theta),
\end{equation}
see for instance \cite[Proposition 1.6.2]{rudolph2017differential}. In the above setting, given $v \in P$, we denote
\begin{equation}
P_{v}(\theta) = F^{-1}(F(v)) \subset P.
\end{equation}
We notice that $P_{v}(\theta) \subset P$, $\forall v \in P$, is an embedded compact submanifold defined by the set of points in $P$ which can be connected to $v$ by a $\theta$-horizontal path, see Eq. (\ref{Holonomybundle}). Given $u \in P$, consider the decomposition
\begin{equation}
\mathfrak{g} = \mathfrak{m} \oplus \mathfrak{hol}_{u}(\theta),
\end{equation}
where $\mathfrak{hol}_{u}(\theta) = {\rm{Lie}}({\rm{Hol}}_{u}(\theta))$, $\mathfrak{m} \cong T_{o}(G/{\rm{Hol}}_{u}(\theta))$, and $o = e{\rm{Hol}}_{u}(\theta)$. From this, $\forall X, Y \in \mathfrak{m}$ and $\forall v \in P$, denoting $F(v) = g{\rm{Hol}}_{u}(\theta)$, we obtain
\begin{equation}
\label{submersion}
g_{T}\Big (\frac{d}{dt} \Big|_{t = 0}F(v\exp(tX)),\frac{d}{dt} \Big|_{t = 0}F(v\exp(tY)) \Big ) \Big |_{F(v)} = \langle X(g),Y(g) \rangle_{g} = \langle X, Y\rangle, 
\end{equation}
here we identify $\mathfrak{g}$ with the Lie algebra of right-invariant vector fields on $G$. On the other hand, by definition of $g_{t}$, we have 
\begin{equation}
\label{submersion1}
g_{t}(j_{v \ast}(X),j_{v \ast}(Y)) = g_{t}\Big (\frac{d}{dt} \Big|_{t = 0}v\exp(tX),\frac{d}{dt} \Big|_{t = 0}v\exp(tY) \Big ) \Big |_{v} =  \langle X, Y\rangle.
\end{equation}
Now we observe that  
\begin{equation}
T_{v}P = \ker(\theta)_{v} \oplus j_{v\ast}(\mathfrak{g}) =  \big (\ker(\theta)_{v} \oplus j_{v\ast}(\mathfrak{hol}_{u}(\theta)) \big) \oplus j_{v\ast}(\mathfrak{m}).
\end{equation}
Thus, from Eq. (\ref{submersion}) and Eq. (\ref{submersion1}), it follows that 
\begin{equation}
F \colon (P,g_{t}) \to (G/{\rm{Hol}}_{u}(\theta),g_{T}), 
\end{equation}
is a proper surjective Riemannian submersion. Therefore, given $a,b \in P$, since $(G/{\rm{Hol}}_{u}(\theta),g_{T})$ is a complete Riemannian manifold, there exists a minimizing geodesic $\alpha$ in $G/{\rm{Hol}}_{u}(\theta)$ connecting $F(a)$ and $F(b)$. Considering, respectively, the vertical and horizontal distributions on $P$ given by $\mathscr{V} := \ker(DF)$ and $\mathscr{H} := \ker(DF)^{\perp_{g_{t}}}$, since $\mathscr{H}$ is Ehresmann-complete, there exists a $\mathscr{H}$-horizontal lift $\tilde{\alpha}$ of $\alpha$, such that $\tilde{\alpha}(0) = a$, see for instance \cite[\S 2.1]{pastore2004riemannian}, \cite{besse2007einstein}. Let us denote $z = \tilde{\alpha}(1) \in P_{b}(\theta)$. From this, we have
\begin{equation}
d_{g_{t}}(a,z) \leq \int_{0}^{1}||\dot{\tilde{\alpha}}(s)||_{g_{t}}{\rm{ds}} = \int_{0}^{1}||\dot{\alpha}(s)||_{g_{T}}{\rm{ds}} = d_{g_{T}}(F(a),F(b)),
\end{equation}
so we conclude that 
\begin{equation}
d_{g_{t}}(a,b) \leq d_{g_{t}}(a,z) + d_{g_{t}}(z,b) \leq d_{g_{T}}(F(a),F(b)) +   d_{g_{t}}(z,b).
\end{equation}
In other words, we obtain
\begin{equation}
d_{g_{t}}(a,b) - d_{g_{T}}(F(a),F(b)) \leq d_{g_{t}}(z,b).
\end{equation}
Since $z,b \in P_{b}(\theta)$, we have a $\theta$-horizonal curve $\gamma \colon [0,1] \to P_{b}(\theta)$, such that $\gamma(0)=z$ and $\gamma(1) = b$. Consider now the sub-Riemannian structure of bundle type $(\mathcal{H}:= \ker(\theta)|_{P_{b}(\theta)},g_{P}|_{\mathcal{H}})$ on $P_{b}(\theta)$, see for instance \cite[Definition 11.2.1]{montgomery2002tour}. Since $g_{t}|_{\mathcal{H}} = f(t)(g_{P}|_{\mathcal{H}})$, we have 
\begin{equation}
d_{g_{t}}(z,b) \leq \int_{0}^{1}||\dot{\gamma}(s)||_{g_{t}}{\rm{ds}} = f(t)\int_{0}^{1}||\dot{\gamma}(s)||_{g_{P}}{\rm{ds}}.
\end{equation}
By taking the infimum over all $\mathcal{H}$-horizontal curves $\gamma \colon [0,1] \to P_{b}(\theta)$, such that $\gamma(0)=z$, and $\gamma(1) = b$, we conclude that 
\begin{equation}
d_{g_{t}}(z,b) \leq f(t){\rm{dist}}_{\mathcal{H}}(z,b),
\end{equation}
where ${\rm{dist}}_{\mathcal{H}}$ is the Carnot-Carath\'{e}odory metric on $P_{b}(\theta)$ induced by $(\mathcal{H},g_{P}|_{\mathcal{H}})$. Notice that, from Theorem \ref{controllableconnection}, it follows that the distribution $\mathcal{H}$ is a bracket-generating distribution on $P_{b}(\theta)$, i.e., the restriction of $\theta$ to $P_{b}(\theta)$ is a controllable connection. From Rashevsky-Chow theorem, it follows that the topology induced by Carnot-Carath\'{e}odory metric ${\rm{dist}}_{\mathcal{H}}$ coincides with the manifold topology of $P_{b}(\theta)$. Since $P_{b}(\theta)$ is compact, we have ${\rm{diam}}(P_{b}(\theta),{\rm{dist}}_{\mathcal{H}}) < \infty$. Therefore, we obtain
\begin{equation}
\label{fundinequality}
d_{g_{t}}(z,b) \leq f(t){\rm{dist}}_{\mathcal{H}}(z,b) \leq f(t){\rm{diam}}(P_{b}(\theta),{\rm{dist}}_{\mathcal{H}}).
\end{equation}
Now we observe the following. Given $v_{1},v_{2} \in P$, it follows that
\begin{equation}
P_{v_{1}}(\theta) = P_{v_{2}}(\theta) \ \ \ {\text{or}} \ \ \ P_{v_{1}}(\theta) \cap P_{v_{2}}(\theta) = \emptyset.
\end{equation}
If $P_{v_{1}}(\theta) \cap P_{v_{2}}(\theta) = \emptyset$, there exists $h \in G$, such that $P_{v_{2}}(\theta) = P_{v_{1}h}(\theta)$. In fact, considering the parallel transport $\tau \colon \pi^{-1}(\pi(v_{2})) \to \pi^{-1}(\pi(v_{1}))$, along some curve joining $\pi(v_{2})$ and $\pi(v_{1})$, we have $v_{2} \sim \tau(v_{2}) = v_{1}h$, for some $h \in G$. Moreover, since 
\begin{equation}
R_{h} \colon (P_{v_{1}}(\theta),g_{P}|_{P_{v_{1}}(\theta)}) \to (P_{v_{2}}(\theta),g_{P}|_{P_{v_{2}}(\theta)}),
\end{equation}
maps each horizontal curve into horizontal curve, it follows that $R_{h}$ defines an isometry, notice that $R_{h}^{\ast}g_{P} = g_{P}$, $\forall h \in G$. In particular, since
\begin{equation}
\ker(\theta_{v_{2}}) = (R_{h})_{\ast}\ker(\theta_{v_{1}}),
\end{equation}
denoting  $\mathcal{H}_{j} := \ker(\theta_{v_{2}})|_{P_{v_{j}}(\theta)}$, $j = 1,2$, it follows that 
\begin{equation}
{\rm{diam}}(P_{v_{2}}(\theta),{\rm{dist}}_{\mathcal{H}_{2}}) = {\rm{diam}}(P_{v_{1}}(\theta),{\rm{dist}}_{\mathcal{H}_{1}}),
\end{equation}
i.e., $ {\rm{diam}}(P_{b}(\theta),{\rm{dist}}_{\mathcal{H}})$ in Eq. (\ref{fundinequality}) does not depend on $b \in P$. Therefore, by setting
\begin{equation}
\varkappa_{0}  := {\rm{diam}}(P_{u}(\theta),{\rm{dist}}_{\mathcal{H}}),
\end{equation}
where ${\rm{dist}}_{\mathcal{H}}$ is the Carnot-Carath\'{e}odory metric induced by the bundle type sub-Riemannnian structure $(\ker(\theta)|_{P_{u}(\theta)},g_{P}|_{P_{u}(\theta)})$ on $P_{u}(\theta)$, we conclude that 
\begin{equation}
d_{g_{t}}(a,b) - d_{g_{T}}(F(a),F(b)) \leq \varkappa_{0}f(t),
\end{equation}
for every $a,b \in P$. Moreover, since $(P,g_{t})$ is a complete Riemannian manifold, we have a minimizing geodesic $\gamma \colon [0,1] \to P$, connecting $a$ and $b$, thus
\begin{equation}
d_{g_{t}}(a,b) = \int_{0}^{1}||\dot{\gamma}(s)||_{g_{t}}{\rm{d}}s  \geq \int_{0}^{1}||F_{\ast}\dot{\gamma}(s)||_{g_{T}}{\rm{d}}s \geq d_{g_{T}}(F(a),F(b)).
\end{equation}
Hence, we obtain
\begin{equation}
\label{fundamentaldist}
|d_{g_{t}}(a,b) - d_{g_{T}}(F(a),F(b))| \leq \varkappa_{0}f(t),
\end{equation}
for every $a,b \in P$. Considering now the correspondence
\begin{equation}
\mathscr{R}_{F} = \Big \{ (u,F(u)) \in P \times (G/{\rm{Hol}}_{u}(\theta)) \ \ \Big | \ \ u \in P\Big \},
\end{equation}
it follows from Eq. (\ref{fundamentaldist}) that 
\begin{equation}
{\rm{dis}}(\mathscr{R}_{F}) = \sup \Big \{ | d_{g_{t}}(a,b) - d_{g_{T}}(F(a),F(b))| \ \big | \ (a,F(a)), (b,F(b)) \in \mathscr{R}_{F}\Big \} \leq \varkappa_{0}f(t).
\end{equation}
From above, we obtain 
\begin{equation}
d_{GH}\big ((P,d_{g_{t}}),(G/{\rm{Hol}}_{u}(\theta),d_{g_{T}})\big)  = \frac{1}{2} \inf \Big \{ {\rm{dis}}(\mathscr{R}) \ \ \Big | \ \ \mathscr{R} \in \mathcal{R}(P,G/{\rm{Hol}}_{u}(\theta)) \Big \} \leq \frac{\varkappa_{0}f(t)}{2},
\end{equation}
for every $t \in [0,T)$, which concludes the proof.
\end{proof}
From above theorem, we immediately have the following corollaries.
\begin{corollary}
Under the hypotheses of the last theorem, if $\lim_{t \to T}f(t) = 0$, then
\begin{equation}
\lim_{t \to T} d_{GH}\big ((P,d_{g_{t}}),(G/{\rm{Hol}}_{u}(\theta),d_{g_{T}})\big)  = 0.
\end{equation}
\end{corollary}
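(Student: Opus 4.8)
The plan is to obtain this as an immediate consequence of Theorem \ref{holonomy-GH} via a squeeze argument; geometrically, as $f(t)\to 0$ the base directions of $g_t$ collapse and $(P,g_t)$ degenerates onto the homogeneous fibre $G/{\rm{Hol}}_u(\theta)$ of the Riemannian submersion $F$, but the only thing we actually need to make rigorous is a two-sided bound on the Gromov--Hausdorff distance. First I would invoke Theorem \ref{holonomy-GH} under the standing hypotheses to get
\begin{equation}
0 \;\leq\; d_{GH}\big((P,d_{g_t}),(G/{\rm{Hol}}_u(\theta),d_{g_T})\big) \;\leq\; \frac{\varkappa_0\, f(t)}{2}, \qquad \forall\, t \in [0,T),
\end{equation}
where $\varkappa_0 := {\rm{diam}}\big(P_u(\theta),{\rm{dist}}_{\mathcal{H}}\big)$. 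The left inequality is just that $d_{GH}$ is a metric, which applies because both $(P,d_{g_t})$ and $(G/{\rm{Hol}}_u(\theta),d_{g_T})$ are compact metric spaces: $P$ is compact, being a bundle with compact fibre $G$ over the compact base $M$, and $g_t$ is a genuine Riemannian metric as long as $f(t)>0$; while $G/{\rm{Hol}}_u(\theta)$ is compact since $G$ is compact and ${\rm{Hol}}_u(\theta)\subset G$ is closed.

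Next I would record the two facts about $\varkappa_0$ already contained in the proof of Theorem \ref{holonomy-GH}: it is \emph{finite} and \emph{independent of} $t$. Finiteness follows from Theorem \ref{controllableconnection} together with the Rashevsky--Chow theorem --- the restriction of $\theta$ to $P_u(\theta)$ is controllable, so $\mathcal{H}=\ker(\theta)|_{P_u(\theta)}$ is bracket-generating, hence ${\rm{dist}}_{\mathcal{H}}$ induces the manifold topology on the compact manifold $P_u(\theta)$ and therefore has finite diameter. Independence of $t$ is clear, since $\varkappa_0$ is built solely from $\theta$ and $g_P$, and neither of these involves the parameter $t$ (indeed, the proof of Theorem \ref{holonomy-GH} even shows $\varkappa_0$ is independent of the chosen base point $u$).

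Finally, letting $t \to T$ and using $\lim_{t\to T} f(t)=0$, the right-hand side $\tfrac12\varkappa_0 f(t)$ tends to $0$, and the squeeze theorem yields
\begin{equation}
\lim_{t \to T} d_{GH}\big((P,d_{g_t}),(G/{\rm{Hol}}_u(\theta),d_{g_T})\big) = 0.
\end{equation}
I do not expect any real obstacle here: the entire analytic content is already packaged in the distance estimate of Theorem \ref{holonomy-GH}, and the corollary reduces to the observation that the prefactor $\varkappa_0$ is a finite constant not depending on $t$. The only point warranting a line of care is the compactness of the two metric spaces, needed so that they are legitimate points of $(\mathcal{M},d_{GH})$, which we have noted above.
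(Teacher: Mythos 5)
Your proposal is correct and matches the paper's approach: the paper derives this corollary immediately from the bound $d_{GH}\leq \tfrac{1}{2}\varkappa_{0}f(t)$ of Theorem \ref{holonomy-GH}, exactly as you do, with the finiteness and $t$-independence of $\varkappa_{0}$ already established in that theorem's proof. Your additional remarks on compactness are consistent with the paper's standing hypotheses and do not change the argument.
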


\begin{corollary}
Under the hypotheses of the last theorem, 
\begin{equation}
d_{GH}\big ((P,d_{g_{P}}),(G/{\rm{Hol}}_{u}(\theta),d_{g_{T}})\big) \leq \frac{{\rm{diam}}\big (P_{u}(\theta),{\rm{dist}}_{\mathcal{H}} \big )}{2},
\end{equation}
where $P_{u}(\theta)$ is the holonomy bundle through $u \in P$ and ${\rm{dist}}_{\mathcal{H}}$ is the Carnot-Carath\'{e}odory metric induced by the bundle type sub-Riemannnian structure $(\ker(\theta),g_{P})$ restricted to $P_{u}(\theta)$.
\end{corollary}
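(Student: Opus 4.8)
The plan is to obtain this estimate as the special case $f\equiv 1$ of Theorem \ref{holonomy-GH}. The point is that $g_{P}=\pi^{\ast}g_{M}+\langle\theta,\theta\rangle$ is precisely the metric $g_{t}$ associated with the constant auxiliary function $f\equiv 1$: if $f(0)=1$ then $g_{0}=f(0)\,\pi^{\ast}g_{M}+\langle\theta,\theta\rangle=g_{P}$, hence $d_{g_{0}}=d_{g_{P}}$, while the target metric $g_{T}$ on $G/{\rm{Hol}}_{u}(\theta)$ does not involve $f$ at all. Concretely I would fix any $T>0$, take $f\colon[0,T)\to\mathbbm{R}$ identically equal to $1$ (continuous and strictly positive, so the hypotheses of Theorem \ref{holonomy-GH} are met), and then read off its conclusion at $t=0$. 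Since the right-hand side there is ${\rm{diam}}\big(P_{u}(\theta),{\rm{dist}}_{\mathcal{H}}\big)\,f(t)/2$ and $f(0)=1$, this yields
\begin{equation*}
d_{GH}\big((P,d_{g_{P}}),(G/{\rm{Hol}}_{u}(\theta),d_{g_{T}})\big)\leq\frac{{\rm{diam}}\big(P_{u}(\theta),{\rm{dist}}_{\mathcal{H}}\big)}{2},
\end{equation*}
which is the claimed inequality.

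Should one prefer a self-contained argument, the alternative is to repeat the proof of Theorem \ref{holonomy-GH} verbatim with $g_{P}$ in place of $g_{t}$. The reduction of the structure group to the closed subgroup ${\rm{Hol}}_{u}(\theta)$ produces a proper surjective Riemannian submersion $F\colon(P,g_{P})\to(G/{\rm{Hol}}_{u}(\theta),g_{T})$ with compact connected fibers $P_{v}(\theta)=F^{-1}(F(v))$. One then lifts a minimizing geodesic between $F(a)$ and $F(b)$ along the $g_{P}$-orthogonal (and Ehresmann-complete) horizontal distribution of $F$, bounds the remaining gap within the single fiber $P_{b}(\theta)$ by its sub-Riemannian diameter, and packages these estimates into the correspondence $\mathscr{R}_{F}=\{(v,F(v))\}$, whose distortion is at most ${\rm{diam}}(P_{u}(\theta),{\rm{dist}}_{\mathcal{H}})$. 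Exactly as in the proof of Theorem \ref{holonomy-GH}, this uses that $\ker(\theta)|_{P_{v}(\theta)}$ is bracket-generating --- which is Theorem \ref{controllableconnection} together with the Ambrose-Singer theorem (Theorem \ref{AS-theo}) --- so that ${\rm{dist}}_{\mathcal{H}}$ is a genuine metric inducing the manifold topology on the compact manifold $P_{v}(\theta)$, hence of finite diameter, and that this diameter is independent of $v$ since each right translation $R_{h}$ is a $g_{P}$-isometry carrying one holonomy bundle isometrically onto another.

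I do not expect any genuine obstacle: the statement is an immediate corollary, and its only substantive ingredients --- the finiteness and base-point independence of ${\rm{diam}}(P_{u}(\theta),{\rm{dist}}_{\mathcal{H}})$, and the Ehresmann-completeness of the horizontal distribution of $F$ used to lift geodesics downstairs --- have already been established in the course of proving Theorem \ref{holonomy-GH}. If anything is worth a line, it is simply that the bound of Theorem \ref{holonomy-GH} is linear in $f(t)$ with coefficient $\tfrac12\,{\rm{diam}}(P_{u}(\theta),{\rm{dist}}_{\mathcal{H}})$, so that the choice $f\equiv 1$ specializes it to the stated estimate.
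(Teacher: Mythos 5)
Your proposal is correct and matches the paper's (implicit) argument: the corollary is obtained by specializing Theorem \ref{holonomy-GH} to the constant function $f\equiv 1$, for which $g_{t}=g_{P}$ and the bound becomes ${\rm{diam}}\big(P_{u}(\theta),{\rm{dist}}_{\mathcal{H}}\big)/2$. No further justification is needed.
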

\begin{corollary}
In the setting of the last corollary, if $g_{P}(t) := \pi^{\ast}g_{M}(t) + \langle \theta,\theta \rangle$, $t \in [0,T)$, for some family of metrics $\{g_{M}(t)\}_{t \in [0,T)}$ on the base manifold $M$, then
\begin{equation}
d_{GH}\big ((P,d_{g_{P}(t)}),(G/{\rm{Hol}}_{u}(\theta),d_{g_{T}})\big) \leq \frac{{\rm{diam}}\big (P_{u}(\theta),{\rm{dist}}_{\mathcal{H}}(t) \big )}{2},
\end{equation}
where $P_{u}(\theta)$ is the holonomy bundle through $u \in P$ and ${\rm{dist}}_{\mathcal{H}}(t)$ is the Carnot-Carath\'{e}odory metric induced by the bundle type sub-Riemannnian structure $(\ker(\theta),g_{P}(t))$ restricted to $P_{u}(\theta)$.
\end{corollary}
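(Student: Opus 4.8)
The plan is to deduce this corollary directly from Theorem~\ref{thmA} (equivalently, from the preceding corollary) by applying it at each fixed value of the parameter $t$. The key observation is that, for fixed $t \in [0,T)$, the metric $g_{P}(t) = \pi^{\ast}g_{M}(t) + \langle \theta,\theta \rangle$ is exactly the metric produced by the construction underlying Theorem~\ref{thmA} when one takes the base metric to be $g_{M}(t)$ and the scaling function to be the constant $f \equiv 1$; the asserted inequality is then the special case $f(t) = 1$ of that theorem.

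First I would record what does \emph{not} change with $t$. The homogeneous target $(G/{\rm{Hol}}_{u}(\theta),g_{T})$ is unaffected: $g_{T}$ is the $G$-invariant metric induced solely by the bi-invariant metric $\langle -,-\rangle$ on $G$ making $\pi \colon G \to G/{\rm{Hol}}_{u}(\theta)$ a Riemannian submersion, and it involves no data from $M$; likewise the holonomy bundle $P_{u}(\theta)$ and the subgroup ${\rm{Hol}}_{u}(\theta)$ depend only on $\theta$, not on any Riemannian metric. Hence the reduction $P_{u}(\theta) \hookrightarrow P$ and the associated proper surjective Riemannian submersion $F \colon (P,g_{P}(t)) \to (G/{\rm{Hol}}_{u}(\theta),g_{T})$ are available for every $t$, exactly as in the proof of Theorem~\ref{thmA}.

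Next I would identify the sub-Riemannian structure. Since $\theta$ vanishes on $\ker(\theta)$, the tensor $\langle \theta,\theta\rangle$ restricts to zero on $\mathcal{H} = \ker(\theta)|_{P_{u}(\theta)}$, so $g_{P}(t)|_{\mathcal{H}} = \pi^{\ast}g_{M}(t)|_{\mathcal{H}}$; thus $(\mathcal{H}, g_{P}(t)|_{\mathcal{H}})$ is precisely the bundle type sub-Riemannian structure on $P_{u}(\theta)$ attached to the base metric $g_{M}(t)$, whose Carnot-Carath\'{e}odory distance is ${\rm{dist}}_{\mathcal{H}}(t)$. By Theorem~\ref{controllableconnection} the distribution $\mathcal{H}$ is bracket-generating on $P_{u}(\theta)$, and since $M$ and $G$ are compact, $P$ is compact and $P_{u}(\theta) \subset P$ is a compact embedded submanifold; the Rashevsky-Chow theorem then gives that the topology of ${\rm{dist}}_{\mathcal{H}}(t)$ is the manifold topology, so ${\rm{diam}}(P_{u}(\theta),{\rm{dist}}_{\mathcal{H}}(t)) < \infty$. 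Running the argument of Theorem~\ref{thmA} verbatim with $f \equiv 1$ then yields the distortion bound ${\rm{dis}}(\mathscr{R}_{F}) \leq {\rm{diam}}(P_{u}(\theta),{\rm{dist}}_{\mathcal{H}}(t))$ for the correspondence $\mathscr{R}_{F} = \{(v,F(v)) : v \in P\}$, whence $d_{GH}\big((P,d_{g_{P}(t)}),(G/{\rm{Hol}}_{u}(\theta),d_{g_{T}})\big) \leq {\rm{diam}}(P_{u}(\theta),{\rm{dist}}_{\mathcal{H}}(t))/2$ for each $t \in [0,T)$.

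I do not expect any genuine obstacle: the statement is a pointwise-in-$t$ reapplication of Theorem~\ref{thmA}, and the only thing worth spelling out is that the diameter controlling the estimate now varies with $t$ through $g_{M}(t)$ while the homogeneous limit $(G/{\rm{Hol}}_{u}(\theta),g_{T})$ is held fixed. In particular, no continuity or smoothness of the family $\{g_{M}(t)\}_{t \in [0,T)}$ is required, since the claimed inequality is established separately for each $t$.
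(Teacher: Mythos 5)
Your proof is correct and is essentially the paper's own argument: the paper derives this corollary by re-running Theorem~\ref{thmA} for each fixed $t$ with $f \equiv 1$ and base metric $g_{M}(t)$, exactly as you do, noting that the target $(G/{\rm{Hol}}_{u}(\theta),g_{T})$ and the holonomy bundle depend only on $\langle -,-\rangle$ and $\theta$ and hence are unchanged, while the diameter bound now varies with $t$ through ${\rm{dist}}_{\mathcal{H}}(t)$.
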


\section{Proof of Theorem B}

In order to prove Theorem \ref{thmB}, we recall the following well-known result (e.g. \cite{nomizu1955reduction, nomizu1956theoreme}).

\begin{theorem}
\label{Nomizu}
Let $\pi \colon P \to M$ be a principal $G$-bundle and let $H \subset G$ be a connected Lie subgroup. Suppose that $\dim(M) \geq 2$. Then there exists a principal connection on $P$ whose holonomy group is $H$ if, and only if, the structure group $G$ can be reduced to $H$. 
\end{theorem}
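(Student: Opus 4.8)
The plan is to treat the two implications separately, disposing of the forward one at once and concentrating the work on the converse. For the implication that a connection with holonomy $H$ yields a reduction to $H$, I would simply invoke the reduction theorem: if $\theta$ is a principal connection on $P$ with ${\rm{Hol}}_{u}(\theta) = H$ for some $u \in P$, then Theorem \ref{holonomybundle} asserts that the holonomy bundle $P_{u}(\theta) \to M$ is a principal ${\rm{Hol}}_{u}(\theta)$-bundle whose inclusion $P_{u}(\theta) \hookrightarrow P$ is ${\rm{Hol}}_{u}(\theta)$-equivariant and covers ${\rm{id}}_{M}$. Since ${\rm{Hol}}_{u}(\theta) = H$, this inclusion is by definition a reduction of the structure group to $H$, and no further argument is needed.

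For the converse, suppose a reduction $\Psi \colon Q \to P$ to $H$ is given, so that $Q \to M$ is a principal $H$-bundle sitting $H$-equivariantly inside $P$ over the identity. My first step is to reduce the problem to the reduced bundle: it suffices to construct a connection $\omega$ on $Q$ with holonomy group exactly $H$. Indeed, the horizontal distribution of $\omega$ pushes forward under $\Psi$ and extends $G$-equivariantly to a horizontal distribution on all of $P$, defining a principal connection $\theta$ on $P$; since $\Psi$ is $H$-equivariant and sends $\omega$-horizontal curves to $\theta$-horizontal curves, parallel transports correspond and ${\rm{Hol}}_{u}(\theta) = {\rm{Hol}}(\omega) = H$. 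Thus everything comes down to showing that a principal $H$-bundle over a connected base $M$ with $\dim(M) \geq 2$ and $H$ connected carries a connection of full holonomy.

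The heart of the matter---and the step I expect to be the main obstacle---is this last construction, and it is exactly where $\dim(M) \geq 2$ is used. I would fix a chart $U \cong \mathbbm{R}^{n}$ with $n = \dim(M) \geq 2$ over which $Q$ trivializes as $U \times H$, and write a connection there as $A = \sum_{i} A_{i}\,dx^{i}$ with $A_{i} \colon U \to \mathfrak{h}$ and curvature components $\Omega_{ij} = \partial_{i}A_{j} - \partial_{j}A_{i} + [A_{i},A_{j}]$. Choosing a basis $\{e_{1},\ldots,e_{d}\}$ of $\mathfrak{h}$ and points $p_{1},\ldots,p_{d}$ along the $x^{1}$-axis, I would insert disjointly supported bump terms so that $\Omega_{12}(p_{k}) = e_{k}$ for each $k$, while keeping $A \equiv 0$ between the supports so that parallel transport along the $x^{1}$-axis from each $p_{k}$ back to a fixed base point is the identity. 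Then the Ambrose--Singer theorem (Theorem \ref{AS-theo}) gives $\mathfrak{hol}_{u}(\theta) \supseteq {\rm{Span}}\{e_{1},\ldots,e_{d}\} = \mathfrak{h}$, the curvature values entering untwisted precisely because the connecting transports are trivial.

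Finally I would extend this local connection arbitrarily to a global connection on $Q$; this can only enlarge the holonomy, but holonomy on an $H$-bundle always lies in $H$, so $\mathfrak{hol}_{u} = \mathfrak{h}$ persists and, since $H$ is connected, ${\rm{Hol}}^{0}_{u} = H$. The chain $H = {\rm{Hol}}^{0}_{u} \subseteq {\rm{Hol}}_{u} \subseteq H$ then forces ${\rm{Hol}}_{u} = H$, completing the converse. The genuine difficulty lies entirely in the curvature prescription of the previous paragraph---ensuring both that the values span $\mathfrak{h}$ and that they reach $\mathfrak{hol}_{u}$ without being conjugated by parallel transport, which I handle by flattening the connection along the connecting path---together with the observation that $\dim(M) \geq 2$ is exactly what makes a single nonvanishing component $\Omega_{12}$ available; the globalization and the final group-theoretic squeeze are by comparison soft.
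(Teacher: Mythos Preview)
Your sketch is sound and follows the classical line of argument, but note that the paper does not supply its own proof of this statement: Theorem~\ref{Nomizu} is recorded as a ``well-known result'' with citations to \cite{nomizu1955reduction, nomizu1956theoreme}, and the closely related remark on the existence of controllable connections likewise defers ``for the rather technical proof'' to \cite{nomizu1956theoreme}. There is thus no in-paper argument to compare against; you have filled in exactly what the authors chose to outsource.

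For what it is worth, your approach matches the classical one: the forward direction is immediate from the reduction theorem (your Theorem~\ref{holonomybundle}), and for the converse one reduces to the $H$-bundle $Q$, prescribes curvature locally in a two-dimensional chart so that the Ambrose--Singer span already equals $\mathfrak{h}$, and then globalizes. Your care in keeping the connection flat along the $x^{1}$-axis so that the curvature values at the $p_{k}$ enter $\mathfrak{hol}_{u}$ unconjugated is exactly the point where casual write-ups of this argument tend to be sloppy, and your final squeeze ${\rm{Hol}}^{0}_{u} = H \subseteq {\rm{Hol}}_{u} \subseteq H$ using connectedness of $H$ is the standard closing move.
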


From the above theorem and the ideas presented in the previous section, we obtain the following result (Theorem \ref{thmB}).

\begin{theorem}
Let $\pi \colon P \to M$ be a principal $G$-bundle, such that $M$ and $G$ are both compact and connected, and $\dim(M) \geq 2$. Consider the subset $\mathcal{M}(P) \subset (\mathcal{M},d_{GH})$ defined by 
\begin{equation}
\mathcal{M}(P) := \big \{ (P,d_{g}) \ | \ d_{g} \ \text{is the distance induced by} \ g \in {\rm{Sym}}_{+}^{2}(T^{\ast}P) \big \}.
\end{equation}
If $\pi \colon P \to M$ is reducible to a closed connected subgroup $H \subset G$, then $(G/H,d_{g_{T}}) \in \overline{\mathcal{M}(P)}^{GH}$.
\end{theorem}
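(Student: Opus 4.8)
The plan is to combine Theorem \ref{Nomizu} with Theorem \ref{holonomy-GH} (Theorem A) together with its first corollary. Here is the outline.

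\textbf{Step 1: Produce a controllable-on-the-holonomy-bundle connection with the right holonomy.} Since $\pi\colon P\to M$ is reducible to the closed connected subgroup $H\subset G$ and $\dim(M)\geq 2$, Theorem \ref{Nomizu} guarantees a principal connection $\theta\in\Omega^{1}(P,\mathfrak g)$ with ${\rm Hol}_{u}(\theta)=H$ for some (hence, up to conjugation, every) $u\in P$. Because $H$ is closed and connected, all the hypotheses of Theorem \ref{holonomy-GH} are met: $G$ and $M$ are compact and connected, $\theta$ is a principal connection, and ${\rm Hol}_{u}(\theta)=H\subset G$ is a closed subgroup. Moreover the holonomy bundle $P_{u}(\theta)$ is compact (it is a closed reduction of the compact bundle $P$, or equivalently a principal $H$-bundle over the compact $M$ with $H$ compact), so $\varkappa_{0}:={\rm diam}(P_{u}(\theta),{\rm dist}_{\mathcal H})<\infty$ by the Rashevsky--Chow theorem as in the proof of Theorem A.

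\textbf{Step 2: Choose a degenerating family of metrics.} Fix a bi-invariant metric $\langle-,-\rangle$ on $G$ and a Riemannian metric $g_{M}$ on $M$; let $g_{T}$ be the induced $G$-invariant metric on $G/H$ making $G\to G/H$ a Riemannian submersion. Pick $T=1$ and $f\colon[0,1)\to\mathbbm R$, $f(t)=1-t$, so that $f>0$ and $\lim_{t\to 1}f(t)=0$. Set $g_{t}:=f(t)\pi^{\ast}g_{M}+\langle\theta,\theta\rangle$. Each $g_{t}$ is a genuine Riemannian metric, i.e. $g_{t}\in{\rm Sym}_{+}^{2}(T^{\ast}P)$, so $(P,d_{g_{t}})\in\mathcal M(P)$ for every $t\in[0,1)$.

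\textbf{Step 3: Pass to the Gromov--Hausdorff limit.} By Theorem \ref{holonomy-GH} applied to $\theta$,
\begin{equation}
d_{GH}\big((P,d_{g_{t}}),(G/H,d_{g_{T}})\big)\leq \frac{\varkappa_{0}\,f(t)}{2}\xrightarrow[t\to 1]{}0 .
\end{equation}
Thus $(G/H,d_{g_{T}})$ is the Gromov--Hausdorff limit of the sequence $\big\{(P,d_{g_{1-1/n}})\big\}_{n\geq 2}\subset\mathcal M(P)$; since $(\mathcal M,d_{GH})$ is a metric space (Gromov--Hausdorff space theorem), the limit lies in the closure, i.e. $(G/H,d_{g_{T}})\in\overline{\mathcal M(P)}^{GH}$, which is the assertion.

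The only genuinely substantive input beyond bookkeeping is Step 1, namely invoking Theorem \ref{Nomizu} to realize $H$ as a holonomy group — and that result is quoted, not proved here. The remaining care needed is purely formal: checking that $g_{t}$ is positive definite (immediate, as $\pi^{\ast}g_{M}$ is nonnegative and $\langle\theta,\theta\rangle$ is positive on the vertical complement, with $f(t)>0$), that the diameter $\varkappa_{0}$ is finite and independent of the base point (already established inside the proof of Theorem A), and that the closure in a metric space contains all sequential limits. So I do not anticipate a serious obstacle; the proof is essentially an assembly of Theorem \ref{Nomizu}, Theorem \ref{holonomy-GH}, and its corollary.
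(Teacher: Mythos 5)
Your proposal is correct and follows essentially the same route as the paper: invoke Theorem \ref{Nomizu} to realize $H$ as the holonomy group of some connection, then apply Theorem \ref{holonomy-GH} to a family of metrics $f(t)\pi^{\ast}g_{M}+\langle\theta,\theta\rangle$ with $f\to 0$ (the paper uses $f_{n}=1/n$ instead of $f(t)=1-t$, an immaterial difference) and pass to the Gromov--Hausdorff limit. The only cosmetic difference is that the paper explicitly cites Cartan's theorem to note $H$ is a closed Lie subgroup before applying Theorem \ref{Nomizu}.
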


\begin{proof}
Suppose that the structure group of $\pi \colon P \to M$ can be reduced to a closed connected subgroup $H \subset G$. From Cartan's theorem, we have that $H \subset G$ is a closed connected Lie subgroup. Applying Theorem \ref{Nomizu}, we have a connection $\theta \in \Omega^{1}(P,\mathfrak{g})$, such that ${\rm{Hol}}_{u}(\theta) = H$, for some $u \in P$. Given $n \in \mathbb{N}$, by setting $f_{n} \equiv \frac{1}{n}$, we define
\begin{equation}
g_{n} := {\textstyle{\frac{1}{n}}}\pi^{\ast}g_{M} + \langle \theta,\theta \rangle  \ \ \text{and} \ \ g_{P} := \pi^{\ast}g_{M} + \langle \theta,\theta \rangle,
\end{equation}
where $g_{M}$ is some Riemannian metric on $M$ and $\langle -,-\rangle $ is some bi-invariant metric on $G$. From Theorem \ref{holonomy-GH}, it follows that   
\begin{equation}
d_{GH}\big ((P,d_{g_{n}}),(G/H,d_{g_{T}})\big) \leq \frac{{\rm{diam}}\big (P_{u}(\theta),{\rm{dist}}_{\mathcal{H}} \big )}{n},
\end{equation}
for every $n \in \mathbbm{N}$, where $P_{u}(\theta)$ is the holonomy bundle and ${\rm{dist}}_{\mathcal{H}}$ is the Carnot-Carath\'{e}odory metric induced by the bundle type sub-Riemannnian structure $(\ker(\theta),g_{P})$ restricted to $P_{u}(\theta)$. Therefore, we obtain a sequence $(P,d_{g_{n}}) \in \mathcal{M}(P)$, $n \in \mathbbm{N}$, such that 
\begin{equation}
\lim_{n\to \infty}d_{GH}\big ((P,d_{g_{n}}),(G/H,d_{g_{T}})\big) = 0.
\end{equation}
From above, we conclude $(G/H,d_{g_{T}}) \in \overline{\mathcal{M}(P)}^{GH}$.
\end{proof}

\subsection*{Data Availability} Data sharing not applicable to this article as no datasets were generated or analysed during the current study.
\subsection*{Conflict of interest statement} The authors declare that there is no conflict of interest.

\bibliographystyle{alpha}
\bibliography{ref.bib}

\begin{thebibliography}{GKPS99}

\bibitem[ABB19]{agrachev2019comprehensive}
Andrei Agrachev, Davide Barilari, and Ugo Boscain.
\newblock {\em A comprehensive introduction to sub-Riemannian geometry}, volume 181.
\newblock Cambridge University Press, 2019.

\bibitem[AS53]{ambrose1953theorem}
Warren Ambrose and Isadore~M Singer.
\newblock A theorem on holonomy.
\newblock {\em Transactions of the American Mathematical Society}, 75(3):428--443, 1953.

\bibitem[AS13]{agrachev2013control}
Andrei~A Agrachev and Yuri Sachkov.
\newblock {\em Control theory from the geometric viewpoint}, volume~87.
\newblock Springer Science \& Business Media, 2013.

\bibitem[BBI01]{brin2001course}
M~Brin, D~Burago, and S~Ivanov.
\newblock A course in metric geometry graduate studies in mathematics, vol. 33.
\newblock {\em Providence: American Mathematical Society}, 2001.

\bibitem[Bes07]{besse2007einstein}
Arthur~L Besse.
\newblock {\em Einstein manifolds}.
\newblock Springer, 2007.

\bibitem[BR96]{bellaiche1996sub}
Andr{\'e} Bella{\i}che and Jean-Jacques Risler.
\newblock Sub-riemannian geometry, volume 144 of progress in mathematics, 1996.

\bibitem[Che66]{chern1966geometry}
Shiing-Shen Chern.
\newblock The geometry of g-structures.
\newblock {\em Bulletin of the American Mathematical Society}, 72(2):167--219, 1966.

\bibitem[Cho40]{chow1940systeme}
Wei-Liang Chow.
\newblock {\"U}ber systeme von liearren partiellen differentialgleichungen erster ordnung.
\newblock {\em Mathematische Annalen}, 117(1):98--105, 1940.

\bibitem[Cor23]{correa2023levi}
Eder~M Correa.
\newblock Levi-civita ricci-flat metrics on non-k{\"a}hler calabi-yau manifolds.
\newblock {\em The Journal of Geometric Analysis}, 33(3):90, 2023.

\bibitem[Ebi68]{ebin1968space}
David~G Ebin.
\newblock On the space of riemannian metrics.
\newblock 1968.

\bibitem[Edw75]{edwards1975structure}
David~A Edwards.
\newblock The structure of superspace.
\newblock In {\em Studies in topology}, pages 121--133. Elsevier, 1975.

\bibitem[Fis70]{fischer1970theory}
Arthur~Elliot Fischer.
\newblock The theory of superspace.
\newblock In {\em Relativity: Proceedings of the Relativity Conference in the Midwest, held at Cincinnati, Ohio, June 2--6, 1969}, pages 303--357. Springer, 1970.

\bibitem[GKPS99]{gromov1999metric}
Mikhael Gromov, Misha Katz, Pierre Pansu, and Stephen Semmes.
\newblock {\em Metric structures for Riemannian and non-Riemannian spaces}, volume 152.
\newblock Springer, 1999.

\bibitem[Gro96]{gromov1996carnot}
Mikhael Gromov.
\newblock Carnot-carath{\'e}odory spaces seen from within.
\newblock In {\em Sub-Riemannian geometry}, pages 79--323. Springer, 1996.

\bibitem[INT15]{ivanov2015gromov}
Alexandr Ivanov, Nadezhda Nikolaeva, and Alexey Tuzhilin.
\newblock The gromov-hausdorff metric on the space of compact metric spaces is strictly intrinsic.
\newblock {\em arXiv preprint arXiv:1504.03830}, 2015.

\bibitem[Jur97]{jurdjevic1997geometric}
Velimir Jurdjevic.
\newblock {\em Geometric control theory}.
\newblock Cambridge university press, 1997.

\bibitem[KMS13]{kolar2013natural}
Ivan Kol{\'a}r, Peter~W Michor, and Jan Slov{\'a}k.
\newblock {\em Natural operations in differential geometry}.
\newblock Springer Science \& Business Media, 2013.

\bibitem[KN63]{kobayashi1963foundations}
S.~Kobayashi and K.~Nomizu.
\newblock {\em Foundations of Differential Geometry}.
\newblock Number v. 1 in Foundations of Differential Geometry [by] Shoshichi Kobayashi and Katsumi Nomizu. Interscience Publishers, 1963.

\bibitem[KO99]{kalton1999distances}
Nigel~J Kalton and Mikhail~I Ostrovskii.
\newblock Distances between banach spaces.
\newblock 1999.

\bibitem[Mas61]{massey1961obstructions}
William~S Massey.
\newblock Obstructions to the existence of almost complex structures.
\newblock 1961.

\bibitem[Mon02]{montgomery2002tour}
Richard Montgomery.
\newblock {\em A tour of subriemannian geometries, their geodesics and applications}.
\newblock Number~91. American Mathematical Soc., 2002.

\bibitem[Nom55]{nomizu1955reduction}
Katsumi Nomizu.
\newblock Reduction theorem for connections and its application to the problem of isotropy and holonomy groups of a riemannian manifold.
\newblock {\em Nagoya Mathematical Journal}, 9:57--66, 1955.

\bibitem[Nom56]{nomizu1956theoreme}
Katsumi Nomizu.
\newblock Un th{\'e}oreme sur les groupes d’holonomie.
\newblock {\em Nagoya Mathematical Journal}, 10:101--103, 1956.

\bibitem[PFI04]{pastore2004riemannian}
Anna~Maria Pastore, Maria Falcitelli, and Stere Ianus.
\newblock {\em Riemannian submersions and related topics}.
\newblock World Scientific, 2004.

\bibitem[Ras38]{rashevsky1938connecting}
PK~Rashevsky.
\newblock About connecting two points of a completely nonholonomic space by admissible curve.
\newblock {\em Uch. Zapiski Ped. Inst. Libknechta}, 2:83--94, 1938.

\bibitem[RS17]{rudolph2017differential}
Gerd Rudolph and Matthias Schmidt.
\newblock {\em Differential Geometry and Mathematical Physics: Part II. Fibre Bundles, Topology and Gauge Fields}.
\newblock Springer, 2017.

\bibitem[Sus73]{sussmann1973orbits}
H{\'e}ctor~J Sussmann.
\newblock Orbits of families of vector fields and integrability of distributions.
\newblock {\em Transactions of the American Mathematical Society}, 180:171--188, 1973.

\bibitem[tD08]{tom2008algebraic}
Tammo tom Dieck.
\newblock {\em Algebraic topology}, volume~8.
\newblock European Mathematical Society, 2008.

\bibitem[Tus95]{Tuschmann1995}
Wilderich Tuschmann.
\newblock Hausdorff convergence and the fundamental group.
\newblock {\em Mathematische Zeitschrift}, 218(2):207--212, 1995.

\bibitem[TW13]{tosatti2013chern}
Valentino Tosatti and Ben Weinkove.
\newblock The chern--ricci flow on complex surfaces.
\newblock {\em Compositio Mathematica}, 149(12):2101--2138, 2013.

\bibitem[TW15]{tuschmann2015moduli}
Wilderich Tuschmann and David~J Wraith.
\newblock {\em Moduli spaces of Riemannian metrics}, volume~46.
\newblock Springer, 2015.

\bibitem[Wal66]{wall1966classification}
Charles Terence~Clegg Wall.
\newblock Classification problems in differential topology. v: On certain 6-manifolds.
\newblock {\em Inventiones mathematicae}, 1(4):355--374, 1966.

\end{thebibliography}

\end{document}